\documentclass[a4paper,11pt,notitlepage]{article}

\title{The stationary distribution of a Markov jump process glued together from two state spaces at two vertices\footnote{This is a revision of our Author's Original Manuscript submitted for publication to Stochastic Models, Taylor \& Francis LLC.}}
%\title{The stationary distribution of a Markov jump process on a state space glued together from two state spaces at two vertices}
%\author{Bence M\'elyk\'uti, Peter Pfaffelhuber}

\author{Bence M\'elyk\'uti\footnote{Department of Mathematical Stochastics, University of Freiburg, Eckerstr.~1, 79104~Freiburg, Germany} \footnote{Center for Biological Systems Analysis (ZBSA), University of Freiburg, Habsburgerstr.~49, 79104~Freiburg, Germany} \footnote{Corresponding author. Email: \texttt{melykuti@stochastik.uni-freiburg.de}.}, Peter Pfaffelhuber\textsuperscript{$\dag$}}
% this footnoting is rather amateurish, but can stay so until we choose a journal and a corresponding template % \ast

\pdfminorversion 3
% reason: http://www.monperrus.net/martin/manuscriptcentral-latex-pdf-version

\usepackage{amssymb} %amssymb,amsmath,amsthm
\usepackage{amsmath}
\usepackage[amsmath,thmmarks]{ntheorem}
\usepackage[cp1250]{inputenc} 
\usepackage[T1]{fontenc}
\usepackage[UKenglish]{babel}
\frenchspacing
\reversemarginpar
\usepackage{graphicx}
%\usepackage{multicol}
%\usepackage{multirow}
%\usepackage{refcheck}
%\graphicspath{{Pics/}}

%\linespread{1.6}
\linespread{1.2} % arXiv

\usepackage[sort&compress,square,super]{natbib}
%\setcitestyle{numbers}

%\usepackage[round]{natbib}
%\usepackage{natbib}

\addtolength{\oddsidemargin}{-1.5cm}
\addtolength{\textwidth}{3cm}
%\addtolength{\topmargin}{-1.8cm}
%\addtolength{\textheight}{3.6cm}

\theorembodyfont{\normalfont}
%\theoremstyle{plain}
%\RequirePackage{latexsym}

%%\theoremsymbol{$\clubsuit$}
%%\newtheorem{const}{Construction}
%%\theoremsymbol{}
\newtheorem{thm}{Theorem}
\newtheorem{prop}[thm]{Proposition}

%\newtheorem{prop}{Proposition}

%\theoremstyle{nonumberplain}
%\newtheorem{propwono}{Proposition}

%\newtheorem{cor}[thm]{Corollary}
%\theorembodyfont{\normalfont}
%%\theoremsymbol{$\clubsuit$}
%\newtheorem{defi}[thm]{Definition}
%%\theoremsymbol{$_\blacksquare$}
%\newtheorem{exa}[thm]{Example}
\theoremstyle{nonumberplain}
%%\theoremsymbol{$_\blacksquare$}
\theoremsymbol{\ensuremath{\square}}
\newtheorem{proof}{Proof}

\newcommand{\id}{\,\mathrm{d}} % this leaves space before d, for writing in an integral

\newcommand{\E}{\mathrm{E}}
\renewcommand{\P}{\mathrm{P}}

\newcommand{\T}{^{\mathrm{T}}}

\begin{document}

\maketitle

\begin{abstract}
%We compute the stationary distribution of a continuous-time Markov chain on a finite state space which is constructed by gluing together two irreducible, continuous-time Markov chains by identifying a pair of states of one chain with a pair of states of the other chain and keeping all transition rates from either chain (the rates between the two shared states are summed). The result expresses the stationary distribution of the glued chain in terms of quantities of the two original chains. Some of the required terms are nonstandard but can be computed by solving systems of linear equations using the transition rate matrices of the two original chains. Special emphasis is given to the difference between the parallel case, when the stationary distribution of the glued chain is a multiple of the equilibria of the original chains, and the non-parallel case, for which bounds are derived.
We compute the stationary distribution of a continuous-time Markov chain which is constructed by gluing together two finite, irreducible Markov chains by identifying a pair of states of one chain with a pair of states of the other and keeping all transition rates from either chain (the rates between the two shared states are summed). The result expresses the stationary distribution of the glued chain in terms of quantities of the two original chains. Some of the required terms are nonstandard but can be computed by solving systems of linear equations using the transition rate matrices of the two original chains. Special emphasis is given to the cases when the stationary distribution of the glued chain is a multiple of the equilibria of the original chains, and when not, for which bounds are derived.
\end{abstract}

\section{Introduction}\label{s:intro}
Computing the stationary distribution of an irreducible continuous-time Markov chain on a finite state space is easy in principle. If $Q$ is the transition rate matrix, it only requires finding a probability vector~$\pi$ that solves $\pi\T Q=0$. (This is a standard result in the classical literature on Markov chains, see e.g.\ \citet{Liggett2010, Norris1998}.) Elementary examples include birth--death chains~\cite{KarlinTaylor1975} or a circular state space~\cite{Adan_Resing_1996}. However, if the transition graph of the Markov chain is more complicated, hardly any general result is known about the shape of the equilibrium. (A notable exception is the Markov chain tree theorem \cite{Leighton_Rivest_1986}.) We are interested whether it is possible to compute the stationary distribution from the stationary distributions of smaller parts of the state space recursively. The motivation for this problem comes from~\citet*{Melykuti_Hespanha_Khammash_2014}, where Markov chains arising from biochemical reaction networks were studied. In this paper, we approach the challenge of computing the stationary distribution of a Markov chain which is obtained by gluing together two Markov chains with simpler transition graphs at two states. (Gluing at a single state is also discussed.)

The standard model of biochemical kinetics represents the reacting system with a dynamical system where each coordinate of the state vector is the number of molecules present of a reacting chemical species. There is great interest in understanding how biochemical modules behave when they are connected (see Del Vecchio's work on retroactivity~\cite{Gyorgy_DelVecchio_2014}). Connecting two biochemical modules is represented by merging $\mathbb{N}^k$ with $\mathbb{N}^\ell$ into an $\mathbb{N}^m$, where $m<k+\ell$ (the shared chemical species are written only once). While we are far from addressing this question, the thinking of this paper points broadly in a similar direction.

A question more classical than connecting biochemical modules is how currents or the total resistance of electrical circuits change when two circuits are connected. Connecting state spaces and studying probability flows, how it is done in this paper, have a similar flavour.

The basic idea of our approach is to use a regenerative structure of the glued Markov chain. If $1$ and~$2$ denote the states which were glued together from the single chains, consider excursions from state~$1$ to~$2$ and back in the glued chain. Such excursions always happen within the single chains, which have a simpler structure. By combining the probabilities of excursions with their lengths, we are able to give the equilibrium of the glued chain in terms of the original chains (Theorem~\ref{th:main}). As a main tool, we use the law of large numbers for regenerative processes (see e.g. \citet{Smith1955,Serfozo2009} or \citet{Roginsky1994} and references therein). Then, in Section~\ref{s:recursion}, we discuss how to apply our main result in practice. A special case arises when the equilibrium of the combined chain is a multiple of the equilibria of the single chains, examined in detail in Section~\ref{s:parallel}. Section~\ref{s:examples} demonstrates some of the results on two related examples.\\

Let us assume that two irreducible, time-homogeneous, continuous-time Markov chains with finite state spaces, $\mathcal{X}^A=(\mathcal{X}^A_t)_{t\ge 0}$ and~$\mathcal{X}^B=(\mathcal{X}^B_t)_{t\ge 0}$, are given. Let $\mathcal{X}^A$ have $r\ge 2$ states,
%\[\mathcal{V}^A=\{V^A_1,V^A_2,\dots,V^A_r\},\]
\[\mathcal{V}^A=\{-r+3,-r+4,\dots,0,1,2\},\]
while $\mathcal{X}^B$ has $s\ge 2$ states,
%\[\mathcal{V}^B=\{V^B_1,V^B_2,\dots,V^B_s\}.\]
\[\mathcal{V}^B=\{1,2,\dots,s\}.\]
The transition rate matrices are $Q^A$ and~$Q^B$, respectively. For $i,j\in\mathcal{V}^A$, $i\neq j$, $Q^A_{ij}$ is the transition rate of $\mathcal{X}^A$ from state~$i$ to state~$j$, and $Q^A_{ii}=-\sum_{j\neq i} Q^A_{ij}$. For $i,j\in\mathcal{V}^B$, $Q^B_{ij}$ is defined analogously. %Similarly, for $i,j\in\mathcal{V}^B$, $i\neq j$, $Q^B_{ij}$ is the transition rate of $\mathcal{X}^B$ from state~$i$ to state~$j$, and $Q^B_{ii}=-\sum_{j\neq i} Q^B_{ij}$.

We create a new Markov chain $\mathcal{X}=(\mathcal{X}_t)_{t\ge 0}$ by gluing together the two state spaces at two states: we identify state $1\in\mathcal{V}^A$ with $1\in\mathcal{V}^B$ (and call it state~$1$ in the glued chain) and $2\in\mathcal{V}^A$ with $2\in\mathcal{V}^B$ (to be denoted by~$2$), and keep all transitions that were present in $\mathcal{X}^A$ or in~$\mathcal{X}^B$. Between states $1$ and~$2$, transitions of both $\mathcal{X}^A$ and~$\mathcal{X}^B$ are retained: the transition rates add up. If originally both chains had a transition, say, from $1$ to~$2$, then we can think of the dynamics as there being a choice between two parallel edges. The glued chain $\mathcal{X}$ has transition rate matrix $Q\in\mathbb{R}^{(r+s-2)\times (r+s-2)}$: for $i,j\in\{-r+3,-r+4,\dots,s\}$, %=(Q_{ij})_{i,j=-r+3,-r+4,\dots,s}
\[Q_{ij}=Q^A_{ij}\,\mathbf{1}(i,j\le 2)+Q^B_{ij}\,\mathbf{1}(i,j\ge 1).\]
%\[Q_{ij}=\begin{cases}
%Q^A_{ij},&\textrm{for}\ i,j\le 2\ \textrm{and}\\
%&(i,j)\notin\{(1,1),(1,2),(2,1),(2,2)\},\\
%Q^B_{ij},&\textrm{for}\ i,j\ge 1\ \textrm{and}\\
%&(i,j)\notin\{(1,1),(1,2),(2,1),(2,2)\},\\
%Q^A_{ij}+Q^B_{ij},&\textrm{for}\ i,j\in\{1,2\},\\
%0,&\textrm{otherwise}.\\
%\end{cases}\]
The set of states of $\mathcal{X}$ that belonged to $\mathcal{X}^A$ with the exception of $\{1, 2\}$ is denoted by
\[\mathcal{S}^A=\mathcal{V}^A\setminus \{1,2\}=\{-r+3,-r+4,\dots,0\}.\]
\[\mathcal{S}^B=\mathcal{V}^B\setminus \{1,2\}=\{3,4,\dots,s\}\]
is defined similarly for~$\mathcal{X}^B$. Hence, the state space of $\mathcal{X}$ can be written as the disjoint union $\mathcal{S}^A\dot{\cup}\{1, 2\}\dot{\cup}\mathcal{S}^B$.

Our goal is to compute the stationary distribution $\pi$ of~$\mathcal{X}$, i.e.\ $\pi\T Q=0$, from the stationary distributions $\pi^A$ and~$\pi^B$ of $\mathcal{X}^A$ and, respectively,~$\mathcal{X}^B$. (Note that irreducibility implies that $\pi^A$, $\pi^B$ and~$\pi$ all exist uniquely and are all strictly positive.) However, it will turn out (see Theorem~\ref{th:main} below) that more information than $\pi^A$ and $\pi^B$ is needed to compute $\pi$. In theory, finding a left nullvector~$\pi$ to matrix~$Q$ is a basic task. Still, we hope to learn more about $\pi$ and~$\mathcal{X}$ by expressing~$\pi$ through $\mathcal{X}^A$ and~$\mathcal{X}^B$. This approach would also give a method to recursively compute the stationary distribution of a Markov chain on a large state space from properties of two smaller parts, parts thereof and so on. %In this form we will find the goal unachievable, but we still manage to express~$\pi$ through quantities of $\mathcal{X}^A$ and~$\mathcal{X}^B$. 

\subsubsection*{Gluing at one state only}
In~\citet{Melykuti_Hespanha_Khammash_2014}, the stationary distribution was expressed when the gluing of two state spaces happened at one state only. Here, with the obvious adjustment of notation, the rate matrices $Q^A\in\mathbb R^{r\times r}$ and $Q^B\in\mathbb R^{s\times s}$ translate into the rate matrix of the glued chain $Q \in \mathbb R^{(r+s-1)\times (r+s-1)}$ by
\[Q_{ij}=Q^A_{ij}\,\mathbf{1}(i,j\le 1)+Q^B_{ij}\,\mathbf{1}(i,j\ge 1)\]
%\[Q_{ij}=\begin{cases}
%Q^A_{ij},&\textrm{for}\ i,j\le 1,\ (i,j)\neq(1,1),\\
%Q^B_{ij},&\textrm{for}\ i,j\ge 1,\ (i,j)\neq(1,1),\\
%Q^A_{11}+Q^B_{11},&\textrm{for}\ i=j=1,\\
%0,&\textrm{otherwise}.\\
%\end{cases}\]
for $i,j\in\{-r+2,-r+3,\dots,s\}$. The resulting stationary distribution is a constant multiple of~$\pi^A$ on~$\mathcal{S}^A$, and of~$\pi^B$ on~$\mathcal{S}^B$ as follows:
\begin{align}\label{eq:982}
\pi_i=\begin{cases}C\pi^A_i \pi^B_1,&\ \textrm{if}\ i\in\{-r+2,-r+3,\dots,0\},\\C\pi^A_1 \pi^B_1,&\ \textrm{if}\ i=1,\\C\pi^A_1 \pi^B_{i},&\ \textrm{if}\ i\in\{2,3,\dots,s\},\end{cases}
\end{align}
with $C=(\pi^A_1+\pi^B_1-\pi^A_1\pi^B_1)^{-1}$. This claim follows easily from the structure of the transition rate matrix~$Q$, or even from the Markov chain tree theorem, but there is an appealing alternative explanation. When the process~$\mathcal{X}$ leaves $\mathcal{S}^A$ for~$\mathcal{S}^B$ during its random walk, it can only do so via the shared state. The process cannot return at any other location but at the shared state. When it comes back, from the perspective of~$\mathcal{S}^A$, it is as if nothing has happened. What happens inside~$\mathcal{S}^B$ has no effect on the relative weighting of the states in the stationary distribution on~$\mathcal{S}^A$. If one disregards the time intervals spent in~$\mathcal{S}^B$, the behaviour of~$\mathcal{X}$ on~$\mathcal{S}^A$ is identical to that of~$\mathcal{X}^A$. From this vantage point, visits to the shared state can be seen as renewal times.

Ref.~\cite{Melykuti_Hespanha_Khammash_2014} noted also that this result allows the recursive computation of the stationary distribution in the case of transition graphs that arise by gluing together linear and circular graphs one by one, but always at one state at a time.

Establishing the stationary distribution when gluing at two states is more difficult: when the process leaves $\mathcal{S}^A$ at one shared state, it might come back via the other. As we shall see, one requires additional information about the two original Markov chains.

\subsubsection*{Examples for gluing at two states}
The forthcoming result allows the computation of the stationary distribution on any irreducible transition graph via the gluing of linear graphs onto a growing graph. A special case of interest is the gluing of a linear path of two states onto a graph because it is equivalent to adding new transitions between two states in a Markov chain or increasing their transition rates if nonzero rates were already defined. Another case of relevance is the gluing of the two ends of a three-state linear path onto a graph because it is equivalent to adding a new state to a Markov chain and connecting it to two pre-existing states.

Both these cases introduce a local perturbation to the Markov process. One might expect that the stationary distribution will change considerably only in a neighbourhood. On the other hand, equilibrium is the long-term behaviour of the process, and the effects of the local perturbation have an infinite amount of time to propagate. It would be interesting to know how these two aspects balance.

%\section{The main result}\label{s:mainresult}
\section{The stationary distribution of the glued Markov chain}\label{s:mainresult}

We start by introducing the necessary notations and notions. Firstly,
\begin{align*}
q_{1A}&:=\sum_{j\neq 1} Q^A_{1j}=-Q^A_{11},&q_{1B}&:=\sum_{j\neq 1} Q^B_{1j}=-Q^B_{11},\\
q_{2A}&:=\sum_{j\neq 2} Q^A_{2j}=-Q^A_{22},&q_{2B}&:=\sum_{j\neq 2} Q^B_{2j}=-Q^B_{22}
%q_{1A}&:=\sum_{j\neq r-1} Q^A_{r-1,j}=-Q^A_{r-1,r-1},&q_{1B}&:=\sum_{j\neq 1} Q^B_{1j}=-Q^B_{11},\\
%q_{2A}&:=\sum_{j\neq r} Q^A_{rj}=-Q^A_{rr},&q_{2B}&:=\sum_{j\neq 2} Q^B_{2j}=-Q^B_{22}
\end{align*}
are the total rates of leaving the glued state $1$ or~$2$ via edges of chain $\mathcal{X}^A$ or~$\mathcal{X}^B$.% Indexing with $1$ and~$2$ reflects that $V^A_{r-1}$ becomes~$S_1$ and $V^A_r$ becomes~$S_2$ at gluing. %$\varepsilon\in\{1,2\}$ $\delta\in\{A,B\}$

%We define an \emph{excursion} to be a transition path of $\mathcal{X}$ (or of an original chain $\mathcal{X}^A$ or $\mathcal{X}^B$) from either state~$1$ or~$2$ (or the corresponding state in the original chain) that leaves the initial state, until the first time when $1$ or $2$ (or the corresponding state in the original chain) is entered.
We define an \emph{excursion} to be a transition path of $\mathcal{X}$ (or of an original chain $\mathcal{X}^A$ or $\mathcal{X}^B$) from either state~$1$ or~$2$ that leaves the initial state, until the first time when $1$ or $2$ is entered. A direct transition from $1$ or~$2$ to the other is also an excursion.

The \emph{type} of an excursion consists of the information, written in superscript, which chain it is in ($\mathcal{X}$, $\mathcal{X}^A$ or~$\mathcal{X}^B$), and, written in subscript, where it starts from (state $1$ or~$2$), whether it uses transitions from $Q^A$ or $Q^B$, and where it ends ($1$ or~$2$). For instance, we talk about $t^A_{1A2}$- or $t_{2B1}$-excursions.

Accordingly, for $\mathcal{X}^A$, the probabilities of different types of excursions can be denoted by
\begin{align*}
p^A_{1A1}&:=\P(\sigma_{1}^A < \sigma_2^A\, |\, \mathcal{X}^A_0 = 1),\\
p^A_{1A2}&:=\P(\sigma_{2}^A < \sigma_{1}^A\, |\, \mathcal{X}^A_0 = 1),
%p^A_{1A1}&:=\P(\sigma_{1}^A < \sigma_2^A\, |\, \mathcal{X}^A_0 = V^A_{r-1}),\\
%p^A_{1A2}&:=\P(\sigma_{2}^A < \sigma_{1}^A\, |\, \mathcal{X}^A_0 = V^A_{r-1}),
\end{align*}
% \begin{align*}
% p^A_{1A1}&:=\P(\mathcal{X}^A \textrm{ leaves }V^A_{r-1}\textrm{ and returns to }V^A_{r-1}\textrm{ before it enters }V^A_r\ |\ \mathcal{X}^A \textrm{ starts in } V^A_{r-1}),\\
% p^A_{1A2}&:=\P(\mathcal{X}^A \textrm{ leaves }V^A_{r-1}\textrm{ and enters }V^A_r\textrm{ before it returns to }V^A_{r-1}\ |\ \mathcal{X}^A \textrm{ starts in } V^A_{r-1}).%,\\
% %p^A_{1\emptyset 2}&:=\P(\mathcal{X}^A \textrm{ jumps directly to } V^A_r\ |\ \mathcal{X}^A \textrm{ starts in } V^A_{r-1}).
% \end{align*}
where $\sigma^{\delta}_{\varepsilon}$ is the first hitting time of~$\varepsilon\in\{1,2\}$ in~$\mathcal{X}^{\delta}$ ($\delta\in\{A,B\}$) after leaving the initial state:
%where $\sigma_{\varepsilon}^A$ is the first hitting time of state $S_{\varepsilon}$ ($\varepsilon\in\{1,2\}$) in $\mathcal{X}^A$ after leaving the initial state.
\begin{align}\label{e:defsigde}
\sigma^{\delta}_{\varepsilon}:=&\begin{cases}\inf\big\{t>0\,\big|\,\exists u\in]0,t[\ \mathcal{X}^{\delta}_u\neq \varepsilon,\ \mathcal{X}^{\delta}_t=\varepsilon\big\},&\textrm{ if }\mathcal{X}^{\delta}_0=\varepsilon,\\\inf\big\{t>0\,\big|\,\mathcal{X}^{\delta}_t=\varepsilon\big\},&\textrm{ otherwise}.\end{cases}
\end{align}
It follows that
\begin{align}\label{e:totalA}
p^A_{1A1}+p^A_{1A2}=1.
\end{align}
$p^A_{2A1}$ and $p^A_{2A2}$ are defined analogously, but conditioned on starting in~$2$. For $\mathcal{X}^B$, the corresponding notations are also introduced, with superscript $B$ in place of~$A$.%, and with $V^B_1$ and $V^B_2$ instead of $V^A_{r-1}$ and~$V^A_r$, respectively.

%The analogous symbols for the glued chain~$\mathcal{X}$ are
%\begin{multline*}
%p_{1A1}:=\P(\mathcal{X} \textrm{ leaves }S_1\textrm{ for }\mathcal{S}^A\textrm{ and returns to } S_1 \textrm{ before it enters }S_2\ |\ \mathcal{X} \textrm{ starts in } S_1),\\
%\shoveleft{p_{1A2}:=\P(\mathcal{X} \textrm{ leaves }S_1\textrm{ with a transition from }Q^A\textrm{ and}}\\\textrm{enters }S_2\textrm{ before it returns to }S_1\ |\ \mathcal{X} \textrm{ starts in } S_1).
%\end{multline*}
%It is straightforward to define the respective quantities with $B$ in place of~$A$. All notations so far introduced for~$\mathcal{X}$ can be recast with $1$ and~$2$ interchanged.

For the analogous symbols for the glued chain~$\mathcal{X}$, first define $\sigma_{\varepsilon}$ for $\varepsilon\in\{1,2\}$, through adapting Definition~\eqref{e:defsigde} of $\sigma^{\delta}_{\varepsilon}$,
%\[\sigma_{\varepsilon}:=\begin{cases}\inf\big\{t>0\,\big|\,\exists u\in]0,t[\ \mathcal{X}_u\neq \varepsilon,\ \mathcal{X}_t=\varepsilon\big\},&\textrm{ if }\mathcal{X}_0=\varepsilon,\\\inf\big\{t>0\,\big|\,\mathcal{X}_t=\varepsilon\big\},&\textrm{ otherwise},\end{cases}\]
%%\[\sigma_{\varepsilon}:=\begin{cases}\textrm{return time to }S_{\varepsilon},&\textrm{ if }\mathcal{X}_0=S_{\varepsilon},\\\textrm{hitting time of }S_{\varepsilon},&\textrm{ otherwise},\end{cases}\]
%that is, the return time to or the hitting time of state~$\varepsilon$, respectively.
by the return time to or the hitting time of state~$\varepsilon$. Secondly, let $\delta \in \{A, B\}$ denote the event that the first transition of the glued chain is a transition within chain~$\mathcal{X}^{\delta}$. Then we let
\begin{align*}
p_{1A1}&:=\P(\sigma_1 < \sigma_2, A \,|\, \mathcal{X}_0 = 1),\\
p_{1A2}&:=\P(\sigma_2 < \sigma_1, A \,|\, \mathcal{X}_0 = 1),
\end{align*}
that is, starting from state~$1$, $p_{1A1}$ is the probability that $\mathcal{X}$ leaves $1$ for $\mathcal{S}^A$ and returns to~$1$ before it enters~$2$, while $p_{1A2}$ is the probability that $\mathcal{X}$ leaves $1$ with a transition from $Q^A$ and enters $2$ before it returns to~$1$.

It is straightforward to define the respective quantities with $B$ in place of~$A$. All notations so far introduced for~$\mathcal{X}$ can be recast with $1$ and~$2$ interchanged.

% (* copied from p11:
%Firstly, for $\varepsilon\in\{1,2\}$, with
%\[\sigma_{\varepsilon}:=\begin{cases}\textrm{return time to }S_{\varepsilon},&\textrm{ if }\mathcal{X}_0=S_{\varepsilon},\\\textrm{hitting time of }S_{\varepsilon},&\textrm{ otherwise},\end{cases}\]
%and event~$\delta$ ($\delta \in \{A, B\}$) denoting the event that the first transition of the glued chain is a transition within chain~$\mathcal{X}^{\delta}$,
%\begin{align}
%p_{1A1}&=\P(\sigma_1 < \sigma_2, A \,|\, \mathcal{X}_0 = S_1) = \P(\sigma_1 < \sigma_2 \,|\, A, \mathcal X_0 = S_1) \, \P(A \,|\,\mathcal X_0 = S_1)  \notag\\
%&=p^A_{1A1}\,\frac{q_{1A}}{q_{1A}+q_{1B}}=\frac{q^A_{1A1}}{q_{1A}}\,\frac{q_{1A}}{q_{1A}+q_{1B}}=\frac{q^A_{1A1}}{q_{1A}+q_{1B}},\label{e:p1A1}
%\end{align}
%where the penultimate equality results from Eq.~\eqref{e:defqdede}. Similarly,
%\begin{align}\label{e:p1A2}
%p_{1A2}&=p^A_{1A2}\,\frac{q_{1A}}{q_{1A}+q_{1B}}=\frac{q^A_{1A2}}{q_{1A}}\,\frac{q_{1A}}{q_{1A}+q_{1B}}=\frac{q^A_{1A2}}{q_{1A}+q_{1B}}.
%\end{align}
%*)

In addition to probabilities, we also need the intensities of leaving on different excursions, so we let
\begin{align}\label{e:defqdede}
q^{\delta}_{\varepsilon \delta \varepsilon'}&:=q_{\varepsilon \delta} \,p^{\delta}_{\varepsilon \delta \varepsilon'}\qquad(\delta\in\{A,B\},\ \varepsilon,\varepsilon'\in\{1,2\}).
\end{align}
We define random variables $\chi^{\delta}_{\varepsilon'}(k)$ for a state $k\in\mathcal{S}^\delta=\mathcal{V}^{\delta}\setminus\{1,2\}$ by
\[\chi^{\delta}_{\varepsilon'}(k):=\int_0^{\sigma^{\delta}_{\varepsilon'}}\mathbf{1} (\mathcal{X}^{\delta}_t=k)\id t.\]
That is, $\chi^{\delta}_{\varepsilon'}(k)$ is the time spent by~$\mathcal{X}^{\delta}$ in a state $k\in\mathcal{S}^\delta$ on a transition path from the initial state until $\mathcal{X}^{\delta}$ reaches state~$\varepsilon'\in\{1,2\}$. We need a notion of conditional expectation of $\chi^{\delta}_{\varepsilon'}(k)$ with initial state~$\varepsilon\in\{1,2\}$, which we define the following way:
\begin{multline*}
\E_{\varepsilon}\Big[\chi^{\delta}_{\varepsilon'}(k)\, \Big|\, \sigma^{\delta}_{\varepsilon'}<\sigma^{\delta}_{3-\varepsilon'}\Big]:=\\
\begin{cases}
\E_{\varepsilon}[\chi^{\delta}_{\varepsilon'}(k)\mathbf{1} (\sigma^{\delta}_{\varepsilon'}<\sigma^{\delta}_{3-\varepsilon'})]\Big/\P_{\varepsilon}(\sigma^{\delta}_{\varepsilon'}<\sigma^{\delta}_{3-\varepsilon'}),&\textrm{if}\ \P_{\varepsilon}(\sigma^{\delta}_{\varepsilon'}<\sigma^{\delta}_{3-\varepsilon'})\neq 0,\\
0,&\textrm{otherwise}.
\end{cases}
\end{multline*}
($\P_{\varepsilon}(\cdot)$, $\E_{\varepsilon}[\cdot]$ are the usual probability, respectively, expectation when the process is started from~$\varepsilon$. The expectations in this paper are all finite because the state spaces are finite and irreducible.)
For compactness, we use the shorthand
\[\E_{\varepsilon\varepsilon'}[\chi^{\delta}_{\varepsilon'}(k)]:=\E_{\varepsilon}\Big[\chi^{\delta}_{\varepsilon'}(k)\, \Big|\, \sigma^{\delta}_{\varepsilon'}<\sigma^{\delta}_{3-\varepsilon'}\Big].\]

\noindent
We introduce the vectors $v,w\in[0,\infty[^{r-2}$, $x,y\in[0,\infty[^{s-2}$ by
\begin{align}
v_i&:=q^A_{1A1}\E_{11}[\chi^A_1(i)]+q^A_{1A2}\E_{12}[\chi^A_2(i)],\notag\\
w_i&:=q^A_{2A2}\E_{22}[\chi^A_2(i)]+q^A_{2A1}\E_{21}[\chi^A_1(i)],&&\textrm{for }i\in\mathcal{S}^A,\notag\\%\{-r+3,-r+4,\dots,0\},\\
x_{j}&:=q^B_{1B1}\E_{11}[\chi^B_1(j)]+q^B_{1B2}\E_{12}[\chi^B_2(j)],\notag\\
y_{j}&:=q^B_{2B2}\E_{22}[\chi^B_2(j)]+q^B_{2B1}\E_{21}[\chi^B_1(j)],&&\textrm{for }j\in\mathcal{S}^B,\label{e:defvwxy}%\{3,4,\dots,s\},
%v_i&:=q^A_{1A1}\E_{11}[\chi^A_1(V^A_i)]+q^A_{1A2}\E_{12}[\chi^A_2(V^A_i)],\\
%w_i&:=q^A_{2A2}\E_{22}[\chi^A_2(V^A_i)]+q^A_{2A1}\E_{21}[\chi^A_1(V^A_i)],&&\textrm{for }i\in\{1,2,\dots,r-2\},\\
%x_{j-2}&:=q^B_{1B1}\E_{11}[\chi^B_1(V^B_j)]+q^B_{1B2}\E_{12}[\chi^B_2(V^B_j)],\\
%y_{j-2}&:=q^B_{2B2}\E_{22}[\chi^B_2(V^B_j)]+q^B_{2B1}\E_{21}[\chi^B_1(V^B_j)],&&\textrm{for }j\in\{3,4,\dots,s\},
\end{align}
and note that these quantities are given purely in terms of the original Markov chains $\mathcal{X}^A$ and~$\mathcal{X}^B$.

\begin{thm}\label{th:main}
The stationary distribution $\pi\in\mathbb{R}^{r+s-2}$ of the glued Markov jump process~$\mathcal{X}$ is given by the unique solution of the following system of linear equations:
\begin{align}
(\pi_{-r+3},\pi_{-r+4},\dots,\pi_0)\T&=\pi_1 v+\pi_2 w,\label{e:pioncaligSA}\\%[6pt]
(\pi_3,\pi_4,\dots,\pi_s)\T&=\pi_1 x+\pi_2 y,\label{e:pioncaligSB}\\
(q^A_{1A2}+q^B_{1B2})\pi_1&=(q^A_{2A1}+q^B_{2B1})\pi_2,\label{e:pionS1S2}\\
\sum_{i=-r+3}^{s}\pi_i&=1.\label{e:pisumsto1}
%(\pi_1,\pi_2,\dots,\pi_{r-2})\T&=\pi_{r-1} v+\pi_r w,\label{e:pioncaligSA}\\%[6pt]
%(\pi_{r+1},\pi_{r+2},\dots,\pi_{r+s-2})\T&=\pi_{r-1} x+\pi_r y,\label{e:pioncaligSB}\\
%(q^A_{1A2}+q^B_{1B2})\pi_{r-1}&=(q^A_{2A1}+q^B_{2B1})\pi_r,\label{e:pionS1S2}\\
%\sum_{i=1}^{r+s-2}\pi_i&=1.\label{e:pisumsto1}
\end{align}
Specifically,\small
\begin{align}
\pi_1&=\frac{q^A_{2A1}+q^B_{2B1}}{(q^A_{2A1}+q^B_{2B1})(\sum v_i+\sum x_i +1)+(q^A_{1A2}+q^B_{1B2})(\sum w_i +\sum y_i +1)},\label{e:pirm1}\\
\pi_2&=\frac{q^A_{1A2}+q^B_{1B2}}{(q^A_{2A1}+q^B_{2B1})(\sum v_i+\sum x_i +1)+(q^A_{1A2}+q^B_{1B2})(\sum w_i +\sum y_i +1)},\label{e:pir}
%\pi_{r-1}&=\frac{q^A_{2A1}+q^B_{2B1}}{(q^A_{2A1}+q^B_{2B1})(\sum v_i+\sum x_i +1)+(q^A_{1A2}+q^B_{1B2})(\sum w_i +\sum y_i +1)},\label{e:pirm1}\\
%\pi_r&=\frac{q^A_{1A2}+q^B_{1B2}}{(q^A_{2A1}+q^B_{2B1})(\sum v_i+\sum x_i +1)+(q^A_{1A2}+q^B_{1B2})(\sum w_i +\sum y_i +1)},\label{e:pir}
\end{align}
\normalsize while $(\pi_{-r+3},\pi_{-r+4},\dots,\pi_0)$ and $(\pi_3,\pi_4,\dots,\pi_s)$ follow from Eqs.~\eqref{e:pioncaligSA} and~\eqref{e:pioncaligSB}.
%\normalsize while $(\pi_1,\pi_2,\dots,\pi_{r-2})$ and $(\pi_{r+1},\pi_{r+2},\dots,\pi_{r+s-2})$ follow from Eqs.~\eqref{e:pioncaligSA} and~\eqref{e:pioncaligSB}.
\end{thm}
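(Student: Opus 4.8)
The plan is to exploit the regenerative structure of~$\mathcal{X}$: I would take the successive entries into state~$1$ as regeneration epochs (state~$1$ is recurrent and, by the strong Markov property, the post-entry evolutions are i.i.d.) and apply the law of large numbers for regenerative processes, i.e.\ the renewal--reward theorem, which gives for every state~$k$
\[
\pi_k = \frac{\E_1\!\left[\text{time that } \mathcal{X} \text{ spends in } k \text{ during one cycle}\right]}{\E_1[T]},
\]
where $T$ is the cycle length and all expectations are finite by irreducibility on a finite state space. The proof then reduces to evaluating the numerators for $k\in\{1,2\}\cup\mathcal{S}^A\cup\mathcal{S}^B$; the common denominator $\E_1[T]$ cancels in every ratio and is never needed explicitly.

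First I would record how the excursion probabilities of~$\mathcal{X}$ factor through those of the original chains. Because every state of~$\mathcal{S}^A$ carries only $Q^A$-edges (and every state of~$\mathcal{S}^B$ only $Q^B$-edges), once $\mathcal{X}$ leaves $1$ or~$2$ along an $A$-edge it evolves exactly as~$\mathcal{X}^A$ until it re-enters~$\{1,2\}$, and symmetrically for~$B$; since the first jump out of~$1$ uses an $A$-edge with probability $q_{1A}/(q_{1A}+q_{1B})$, this gives $p_{1A1}=q^A_{1A1}/(q_{1A}+q_{1B})$ and analogously for the remaining seven such quantities, which together sum to~$1$ by~\eqref{e:totalA} and its $B$-analogue. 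A cycle then decomposes as follows: $\mathcal{X}$ waits a mean-$1/(q_{1A}+q_{1B})$ time at~$1$, performs one excursion that returns to~$1$ with probability $p_{1\cdot 1}:=p_{1A1}+p_{1B1}$ (ending the cycle with no visit to~$2$) or reaches~$2$ with probability $p_{1\cdot 2}$, and in the latter case makes a geometric$(p_{2\cdot 1})$ number of excursions based at~$2$, where $p_{2\cdot 1}:=p_{2A1}+p_{2B1}$, until the first $2\to1$ excursion closes the cycle. Hence the expected number of visits to~$2$ per cycle is $p_{1\cdot 2}/p_{2\cdot 1}$, so comparing the mean sojourn times at~$1$ and~$2$ yields $\pi_2/\pi_1 = p_{1\cdot 2}(q_{1A}+q_{1B})/\big(p_{2\cdot 1}(q_{2A}+q_{2B})\big)$; inserting the factorisations collapses this to $(q^A_{1A2}+q^B_{1B2})/(q^A_{2A1}+q^B_{2B1})$, which is~\eqref{e:pionS1S2}.

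Next, for an interior state $i\in\mathcal{S}^A$ I would sort the expected time spent in~$i$ per cycle by the excursion that generates it. The opening excursion from~$1$ contributes $p_{1A1}\E_{11}[\chi^A_1(i)]+p_{1A2}\E_{12}[\chi^A_2(i)]$ (the $B$-excursions never visit~$\mathcal{S}^A$), which after inserting the factorisations and Definition~\eqref{e:defvwxy} equals $v_i/(q_{1A}+q_{1B})$; each excursion based at~$2$ contributes $w_i/(q_{2A}+q_{2B})$ in expectation, and there are $p_{1\cdot 2}/p_{2\cdot 1}$ of them in the mean. Dividing by $\E_1[T]$ and recognising $1/(q_{1A}+q_{1B})=\pi_1\E_1[T]$ and $p_{1\cdot 2}/\big(p_{2\cdot 1}(q_{2A}+q_{2B})\big)=\pi_2\E_1[T]$ turns these two contributions into $\pi_1 v_i$ and $\pi_2 w_i$, establishing~\eqref{e:pioncaligSA}; the same argument with $A$ and~$B$ exchanged gives~\eqref{e:pioncaligSB}. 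The explicit expressions~\eqref{e:pirm1} and~\eqref{e:pir} then follow by substituting \eqref{e:pioncaligSA}--\eqref{e:pionS1S2} into the normalisation~\eqref{e:pisumsto1} and solving the resulting two-unknown system for $\pi_1,\pi_2$, while uniqueness is inherited from the uniqueness of the stationary distribution.

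I expect the main obstacle to be the bookkeeping of the excursions emanating from state~$2$. One must check that the conditional expectations $\E_{\varepsilon\varepsilon'}[\chi^\delta_{\varepsilon'}(i)]$ are exactly the right per-excursion contributions—this is where the conditioning built into their definition, together with the independence of successive excursions guaranteed by the strong Markov property, enters—and then that the time accumulated over a geometric number of them factors as (expected number of excursions)$\,\times\,$(expected contribution per excursion). The latter is a Wald-type identity: writing $N$ for the number of excursions based at~$2$ and $Y_m$ for the time spent in~$i$ during the $m$-th of them, the event $\{N\ge m\}$ depends only on excursions $1,\dots,m-1$ and is therefore independent of~$Y_m$, so $\E[\sum_{m=1}^N Y_m]=\E[N]\,\E[Y_1]$ with $\E[Y_1]=w_i/(q_{2A}+q_{2B})$ and no spurious cross-terms between the $2\to2$ excursions and the terminating $2\to1$ excursion. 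Once this accounting is secured, every identity in the theorem follows by substitution.
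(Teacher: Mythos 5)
Your proof is correct and follows essentially the same route as the paper: both rest on the law of large numbers for regenerative processes, the factorisation of the glued chain's excursion probabilities through those of the original chains (e.g.\ $p_{1A1}=q^A_{1A1}/(q_{1A}+q_{1B})$), and the resulting per-cycle occupation-time identities that yield Eqs.~\eqref{e:pioncaligSA}--\eqref{e:pionS1S2}, after which the explicit formulas and uniqueness follow by solving the normalised two-unknown system exactly as in the paper. The only differences are bookkeeping: the paper regenerates at entries into state~$1$ that follow a visit to state~$2$ (so each cycle contains geometrically many $1\to 1$ and $2\to 2$ excursions plus one crossing in each direction) and computes the geometric means explicitly, whereas you regenerate at every entry into state~$1$ and control the random number of excursions from~$2$ via Wald's identity.
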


In plain terms, the ratio of $\pi_1/\pi_2$ is dependent on the intensity of leaving state~$2$ towards $1$ versus the intensity of transitioning in the opposite direction. The stationary probability on $k\in\mathcal{S}^A\cup\mathcal{S}^B$ is dependent on the intensity of leaving on an excursion towards its half of the state space times the expected time spent there on any such excursion, weighted by the probability masses $\pi_1$ and~$\pi_2$ of states $1$ and~$2$.

% \subsubsection*{Applying Theorem~\ref{th:main}}
%Applying Theorem~\ref{th:main} requires to carry out two tasks. First, compute $q^{\delta}_{\varepsilon \delta \varepsilon'}$ and $\E_{\varepsilon\varepsilon'}[\chi^{\delta}_{\varepsilon'}(V)]$ ($\delta\in\{A,B\}$, $\varepsilon,\varepsilon'\in\{1,2\}$, $V\in\mathcal{V}^{\delta}\setminus\{V^A_{r-1},V^A_r,V^B_1,V^B_2\}$) in order to be able to obtain the vectors $v,w,x,y$. Second, solve the linear systems (\ref{e:pioncaligSA}) and (\ref{e:pioncaligSB}) with the restrictions \eqref{e:pionS1S2} and \eqref{e:pisumsto1}, which are a system of $r+s-2$ linear equations for the $r+s-2$ variables in~$\pi$: $r-2$ equations about~$\mathcal{S}^A$ from Eq.~\eqref{e:pi1pirm2}, $s-2$ equations about~$\mathcal{S}^B$ from Eq.~\eqref{e:pirp1rpsm2}, Eq.~\eqref{e:pirm1pir} connects $\mathcal{S}^A$ and~$\mathcal{S}^B$, and the entries of~$\pi$ must sum to one.\\

We continue in Section~\ref{s:mainproof} with the proof of this theorem. For applications of the theorem, Section~\ref{s:recursion} discusses how to compute $v,w,x,y$ through $q^{\delta}_{\varepsilon \delta \varepsilon'}$ and $\E_{\varepsilon\varepsilon'}[\chi^{\delta}_{\varepsilon'}(k)]$ ($\delta\in\{A,B\}$, $\varepsilon,\varepsilon'\in\{1,2\}$, $k\in\mathcal{S}^{\delta}$). Section~\ref{s:algo} summarises the required calculations in algorithmic form. The special case which is most similar to gluing at one state (i.e.\ the case when $\pi^A, \pi^B$ and $\pi$ are constant multiples of each other) is examined in Theorem~\ref{th:parallel} and Proposition~\ref{th:equiv} of Section~\ref{s:parallel}. Afterwards, for the complementary case, we give bounds for the stationary distribution in terms of the stationary distributions of the original chains in Theorem~\ref{th:viwifrompi}. The article ends with case studies in Section~\ref{s:examples} and concluding remarks.

\section{Proof of Theorem~\ref{th:main}}\label{s:mainproof}

The argument we give considers the Markov chains $\mathcal{X}$, $\mathcal{X}^A$ and~$\mathcal{X}^B$ as regenerative processes, where the renewal times are defined to be the times when $\mathcal{X}$ (or $\mathcal{X}^A$ or~$\mathcal{X}^B$, respectively) enters state $1$ such that it visited $2$ more recently than~$1$.

The calculation is based on the ergodic theorem for regenerative processes (\citet{Smith1955} or \citet{Serfozo2009}, p123): the weight in the stationary distribution assigned to any one state is proportional to the time the process spends in that state in an average segment of the regenerative process. Between any two renewal events,
\begin{enumerate}
\item $\mathcal{X}$ makes nonnegative numbers of $t_{1A1}$- and $t_{1B1}$-excursions (denoted by $\xi_{1A1}$ and $\xi_{1B1}$, respectively),
\item $\mathcal{X}$ transitions from state~$1$ to state~$2$ via a $t_{1A2}$- or a $t_{1B2}$-excursion,
%\item $\mathcal{X}$ transitions from $S_1$ to $S_2$ via a $t_{1A2}$- or a $t_{1B2}$-excursion, or directly via an edge from $\mathcal{X}^A$ or $\mathcal{X}^B$,
\item $\mathcal{X}$ makes nonnegative numbers of $t_{2A2}$- and $t_{2B2}$-excursions (denoted by $\xi_{2A2}$ and $\xi_{2B2}$, respectively),
\item $\mathcal{X}$ transitions from state~$2$ to state~$1$ via a $t_{2A1}$- or a $t_{2B1}$-excursion.
%\item $\mathcal{X}$ transitions from $S_2$ to $S_1$ via a $t_{2A1}$- or a $t_{2B1}$-excursion, or directly via an edge from $\mathcal{X}^A$ or $\mathcal{X}^B$.
\end{enumerate}
Due to the finiteness of state spaces and irreducibility, the nonnegative numbers $\xi_{\varepsilon\delta\varepsilon}$ are finite almost surely. They are drawn from, first, a geometric distribution (we mean the variant of the geometric distribution which can take the value~0: with parameter~$p\in]0,1]$, $\P(Y=k)=(1-p)^k p$ for any nonnegative integer~$k$), which is then partitioned into two (whether the excursions are in $\mathcal{S}^A$ or in~$\mathcal{S}^B$) with a binomial variable.

This regenerative structure is used to describe the behaviour of~$\mathcal{X}$ in terms of the behaviour of $\mathcal{X}^A$ and~$\mathcal{X}^B$. Since if for $\mathcal{X}^A$ (the case of $\mathcal{X}^B$ is similar) renewals are defined analogously by returns to $1$, then the behaviour between any two renewal events is the following:
\begin{enumerate}
\item $\mathcal{X}^A$ makes a geometrically distributed number $\xi^A_{1A1}$ of $t^A_{1A1}$-excursions,
\item $\mathcal{X}^A$ transitions from state~$1$ to state~$2$ via a $t^A_{1A2}$-excursion,
%\item $\mathcal{X}^A$ transitions from $V^A_{r-1}$ to $V^A_r$ via a $t^A_{1A2}$-excursion,
\item $\mathcal{X}^A$ makes a geometrically distributed number $\xi^A_{2A2}$ of $t^A_{2A2}$-excursions,
\item $\mathcal{X}^A$ transitions from state~$2$ to state~$1$ via a $t^A_{2A1}$-excursion.
%\item $\mathcal{X}^A$ transitions from $V^A_r$ to $V^A_{r-1}$ via a $t^A_{2A1}$-excursion.
\end{enumerate}

\subsection{Calculation for the chains $\mathcal X^A$ and $\mathcal X^B$}
Let the random variable $\tau$ be the length of one segment of~$\mathcal{X}$ as a regenerative process (the inter-renewal time), and define $\tau^\delta$ analogously for $\mathcal{X}^\delta$ ($\delta\in\{A,B\}$). Further, let $X^{\delta}(i)$ denote the total time spent by~$\mathcal{X}^{\delta}$ in a state~$i\in\mathcal{V}^{\delta}$ in a complete segment. Using the ergodic theorem for regenerative processes for the first equality in each line, and using the second numbered list for the second equality in each line, for $i\in\mathcal{S}^A$,
\begin{align}
\pi^A_i&=\E[\tau^A]^{-1}\E[X^A(i)]=\E[\tau^A]^{-1}\bigg(\E[\xi^A_{1A1}]\E_{11}[\chi^A_1(i)]+\E_{12}[\chi^A_2(i)]\notag\\
&\quad+\E[\xi^A_{2A2}]\E_{22}[\chi^A_2(i)]+\E_{21}[\chi^A_1(i)]\bigg),\notag\\
\pi^A_1&=\E[\tau^A]^{-1}\E[X^A(1)]=\E[\tau^A]^{-1}\bigg(\E[\xi^A_{1A1}]\frac{1}{q_{1A}}+\frac{1}{q_{1A}}+0+0\bigg),\notag\\
\pi^A_2&=\E[\tau^A]^{-1}\E[X^A(2)]=\E[\tau^A]^{-1}\bigg(0+0+\E[\xi^A_{2A2}]\frac{1}{q_{2A}}+\frac{1}{q_{2A}}\bigg).\label{e:piA}
%\pi^A_i&=\E[\tau^A]^{-1}\E[X^A(V^A_i)]=\E[\tau^A]^{-1}\bigg(\E[\xi^A_{1A1}]\E_{11}[\chi^A_1(V^A_i)]+\E_{12}[\chi^A_2(V^A_i)]\notag\\
%&\quad+\E[\xi^A_{2A2}]\E_{22}[\chi^A_2(V^A_i)]+\E_{21}[\chi^A_1(V^A_i)]\bigg),\notag\\
%\pi^A_{r-1}&=\E[\tau^A]^{-1}\E[X^A(V^A_{r-1})]=\E[\tau^A]^{-1}\bigg(\E[\xi^A_{1A1}]\frac{1}{q_{1A}}+\frac{1}{q_{1A}}+0+0\bigg),\notag\\
%\pi^A_r&=\E[\tau^A]^{-1}\E[X^A(V^A_r)]=\E[\tau^A]^{-1}\bigg(0+0+\E[\xi^A_{2A2}]\frac{1}{q_{2A}}+\frac{1}{q_{2A}}\bigg).\label{e:piA}
\end{align}
%% What follows in comment is incorrect:
%\pi^A_{r-1}&=\E[\tau^A]^{-1}\E[X^A(V^A_{r-1})]=\E[\tau^A]^{-1}\bigg(\E[\xi^A_{1A1}]\frac{1}{p^A_{1A1}}+\frac{1}{p^A_{1A2}+p^A_{1\emptyset 2}}+0+0\bigg),\notag\\
%\pi^A_r&=\E[\tau^A]^{-1}\E[X^A(V^A_r)]=\E[\tau^A]^{-1}\bigg(0+0+\E[\xi^A_{2A2}]\frac{1}{p^A_{2A2}}+\frac{1}{p^A_{2A1}+p^A_{2\emptyset 1}}\bigg).\label{e:piA}

%Until we do not know $p^A_{1A1}$ and $p^A_{1A2}$ separately, we will not really know the parameter
%\[p^A_{1A2}+p^A_{1\emptyset 2}=q_{1A}^{-1}(q^A_{1A2}+q^A_{1\emptyset 2})\]
%of the geometric random variable $\xi^A_{1A1}$, nor its mean
The parameter of the nonnegative geometric random variable $\xi^A_{1A1}$ is
\[p^A_{1A2}=q^A_{1A2}/q_{1A}\in]0,1]\]
(cf. Eq.~\eqref{e:defqdede}). By Eqs.~\eqref{e:totalA} and~\eqref{e:defqdede}, its mean is
\begin{align*}
\E[\xi^A_{1A1}]&=\frac{1-p^A_{1A2}}{p^A_{1A2}}=\frac{p^A_{1A1}}{p^A_{1A2}}=\frac{q^A_{1A1}}{q^A_{1A2}},
\end{align*}
and similarly,
\begin{align*}
\E[\xi^A_{2A2}]&=\frac{p^A_{2A2}}{p^A_{2A1}}=\frac{q^A_{2A2}}{q^A_{2A1}}.
\end{align*}
Note that $\E[\xi^A_{1A1}]=p^A_{1A1}=0$ is possible, e.g. if the state~$1$ is not connected to any other state than~$2$. All the other analogous mean numbers of returning excursions might be zero for certain state space diagrams. It is also true that $q^{\delta}_{\varepsilon \delta \varepsilon}$ might be zero, but $q^{\delta}_{\varepsilon, \delta, 3-\varepsilon}$ will always be positive because of irreducibility.

Let us substitute the expressions for $\E[\xi^A_{1A1}]$ and $\E[\xi^A_{2A2}]$ into the formulae for $\pi^A_i$, Eqs.~\eqref{e:piA}, and for $\pi^A_1$ and~$\pi^A_2$ use
%while for $\pi^A_{r-1}$ and~$\pi^A_r$, use
\begin{align*}
q^A_{1A1}+q^A_{1A2}&=q_{1A},\\
q^A_{2A1}+q^A_{2A2}&=q_{2A}.
\end{align*}
Then, for $i\in\mathcal{S}^A$,
\begin{align*}
\pi^A_i&=\E[\tau^A]^{-1}\bigg(\frac{q^A_{1A1}}{q^A_{1A2}}\E_{11}[\chi^A_1(i)]+\E_{12}[\chi^A_2(i)]\\
&\quad+\frac{q^A_{2A2}}{q^A_{2A1}}\E_{22}[\chi^A_2(i)]+\E_{21}[\chi^A_1(i)]\bigg),\\
\pi^A_1&=\E[\tau^A]^{-1}\bigg(\frac{q^A_{1A1}}{q^A_{1A2}}\,\frac{1}{q_{1A}}+\frac{1}{q_{1A}}\bigg)=\E[\tau^A]^{-1}\frac{1}{q^A_{1A2}},\\
\pi^A_2&=\E[\tau^A]^{-1}\bigg(\frac{q^A_{2A2}}{q^A_{2A1}}\,\frac{1}{q_{2A}}+\frac{1}{q_{2A}}\bigg)=\E[\tau^A]^{-1}\frac{1}{q^A_{2A1}}.
%\pi^A_i&=\E[\tau^A]^{-1}\bigg(\frac{q^A_{1A1}}{q^A_{1A2}}\E_{11}[\chi^A_1(V^A_i)]+\E_{12}[\chi^A_2(V^A_i)]\\
%&\quad+\frac{q^A_{2A2}}{q^A_{2A1}}\E_{22}[\chi^A_2(V^A_i)]+\E_{21}[\chi^A_1(V^A_i)]\bigg),\\
%\pi^A_{r-1}&=\E[\tau^A]^{-1}\bigg(\frac{q^A_{1A1}}{q^A_{1A2}}\,\frac{1}{q_{1A}}+\frac{1}{q_{1A}}\bigg)=\E[\tau^A]^{-1}\frac{1}{q^A_{1A2}},\\
%\pi^A_r&=\E[\tau^A]^{-1}\bigg(\frac{q^A_{2A2}}{q^A_{2A1}}\,\frac{1}{q_{2A}}+\frac{1}{q_{2A}}\bigg)=\E[\tau^A]^{-1}\frac{1}{q^A_{2A1}}.
\end{align*}
It follows that
\begin{align}\notag
\pi^A_i&=\pi^A_1q^A_{1A1}\E_{11}[\chi^A_1(i)]+\pi^A_1 q^A_{1A2}\E_{12}[\chi^A_2(i)]\\[6pt]
&\quad+\pi^A_2 q^A_{2A2}\E_{22}[\chi^A_2(i)]+\pi^A_2 q^A_{2A1}\E_{21}[\chi^A_1(i)]\label{eq:4321}
%\pi^A_i&=\pi^A_{r-1}q^A_{1A1}\E_{11}[\chi^A_1(V^A_i)]+\pi^A_{r-1}q^A_{1A2}\E_{12}[\chi^A_2(V^A_i)]\\[6pt]
%&\quad+\pi^A_r q^A_{2A2}\E_{22}[\chi^A_2(V^A_i)]+\pi^A_r q^A_{2A1}\E_{21}[\chi^A_1(V^A_i)]\label{eq:4321}
\end{align}
$(i\in\mathcal{S}^A)$. It is also true that
\begin{equation}\label{e:piArm1piAr}
\frac{\pi^A_1}{\pi^A_2}=\frac{q^A_{2A1}}{q^A_{1A2}}.
%\frac{\pi^A_{r-1}}{\pi^A_r}=\frac{q^A_{2A1}}{q^A_{1A2}}.
\end{equation}

%\emph{\textbf{Can it be used that $\pi^A_{r-1}, \pi^A_r$ are known to compute $q^A_{2A1}, q^A_{1A2}$?}}

We assume that all elementary transition rates are known, and so are $\pi^A_i$ for every~$i$. However, $q^A_{1A1}$ and~$q^A_{1A2}$ are known only to the extent that $q^A_{1A1}+q^A_{1A2}=q_{1A}$. Similarly, $q^A_{2A1}+q^A_{2A2}=q_{2A}$ is all that is known about $q^A_{2A1}$ and~$q^A_{2A2}$.
For $i\in\mathcal{S}^A$, we would like to know
\[\E_{11}[\chi^A_1(i)],\ \E_{12}[\chi^A_2(i)],\ \E_{21}[\chi^A_1(i)],\ \E_{22}[\chi^A_2(i)],\]
but these are not trivially accessible either. In Section~\ref{s:recursion}, we revisit these questions. We write down implicit relations for both sets of unknowns, which result in systems of linear equations whose solutions give both sets of unknowns.
% However, through the known $\pi^A_i$, we have a system of linear equalities in them. Bear in mind, we have more variables than equations, and $p^A_{1A1}$ and $p^A_{1A2}$, $p^A_{2A2}$ and $p^A_{2A1}$, $\E[\xi^A_{1A1}]$ and $\E[\xi^A_{2A2}]$ are unknown. $\E[\tau^A]$ is also unknown, but that is a normalising constant and can be derived from the others.\\

\subsection{Calculation for the glued chain}
We start by establishing some simple connections between transition probabilities in the glued chain and those in the original chains. With the notations of Section~\ref{s:mainresult},
%Firstly, for $\varepsilon\in\{1,2\}$, with
%\[\sigma_{\varepsilon}:=\begin{cases}\textrm{return time to }S_{\varepsilon},&\textrm{ if }\mathcal{X}_0=S_{\varepsilon},\\\textrm{hitting time of }S_{\varepsilon},&\textrm{ otherwise},\end{cases}\]
%and event~$\delta$ ($\delta \in \{A, B\}$) denoting the event that the first transition of the glued chain is a transition within chain~$\mathcal{X}^{\delta}$,
\begin{align}
p_{1A1}&=\P(\sigma_1 < \sigma_2, A \,|\, \mathcal{X}_0 = 1) = \P(\sigma_1 < \sigma_2 \,|\, A, \mathcal X_0 = 1) \, \P(A \,|\,\mathcal X_0 = 1)  \notag\\
&=p^A_{1A1}\,\frac{q_{1A}}{q_{1A}+q_{1B}}=\frac{q^A_{1A1}}{q_{1A}}\,\frac{q_{1A}}{q_{1A}+q_{1B}}=\frac{q^A_{1A1}}{q_{1A}+q_{1B}},\label{e:p1A1}
\end{align}
where the penultimate equality results from Eq.~\eqref{e:defqdede}. Similarly,
\begin{align}\label{e:p1A2}
p_{1A2}&=p^A_{1A2}\,\frac{q_{1A}}{q_{1A}+q_{1B}}=\frac{q^A_{1A2}}{q_{1A}}\,\frac{q_{1A}}{q_{1A}+q_{1B}}=\frac{q^A_{1A2}}{q_{1A}+q_{1B}}.
\end{align}

The equations for the glued chain~$\mathcal{X}$ that correspond to Eqs.~\eqref{e:piA} are, for $i\in\mathcal{S}^A$,
\begin{align*}
\pi_i&=\E[\tau]^{-1}\bigg(\E[\xi_{1A1}]\E_{11}[\chi^A_1(i)]+\frac{p_{1A2}}{p_{1A2}+p_{1B2}}\E_{12}[\chi^A_2(i)]\\
&\quad+\E[\xi_{2A2}]\E_{22}[\chi^A_2(i)]+\frac{p_{2A1}}{p_{2A1}+p_{2B1}}\E_{21}[\chi^A_1(i)]\bigg),\\
\pi_1&=\E[\tau]^{-1}\bigg(\E[\xi_{1A1}+\xi_{1B1}]\frac{1}{q_{1A}+q_{1B}}+\frac{1}{q_{1A}+q_{1B}}+0+0\bigg),\\
\pi_2&=\E[\tau]^{-1}\bigg(0+0+\E[\xi_{2A2}+\xi_{2B2}]\frac{1}{q_{2A}+q_{2B}}+\frac{1}{q_{2A}+q_{2B}}\bigg),
%\pi_i&=\E[\tau]^{-1}\bigg(\E[\xi_{1A1}]\E_{11}[\chi^A_1(V^A_i)]+\frac{p_{1A2}}{p_{1A2}+p_{1B2}}\E_{12}[\chi^A_2(V^A_i)]\\
%&\quad+\E[\xi_{2A2}]\E_{22}[\chi^A_2(V^A_i)]+\frac{p_{2A1}}{p_{2A1}+p_{2B1}}\E_{21}[\chi^A_1(V^A_i)]\bigg),\\
%\pi_{r-1}&=\E[\tau]^{-1}\bigg(\E[\xi_{1A1}+\xi_{1B1}]\frac{1}{q_{1A}+q_{1B}}+\frac{1}{q_{1A}+q_{1B}}+0+0\bigg),\\
%\pi_r&=\E[\tau]^{-1}\bigg(0+0+\E[\xi_{2A2}+\xi_{2B2}]\frac{1}{q_{2A}+q_{2B}}+\frac{1}{q_{2A}+q_{2B}}\bigg),
\end{align*}
and for $i\in\mathcal{S}^B$,
\begin{align}
\pi_i&=\E[\tau]^{-1}\bigg(\E[\xi_{1B1}]\E_{11}[\chi^B_1(i)]+\frac{p_{1B2}}{p_{1A2}+p_{1B2}}\E_{12}[\chi^B_2(i)]\notag\\
&\quad+\E[\xi_{2B2}]\E_{22}[\chi^B_2(i)]+\frac{p_{2B1}}{p_{2A1}+p_{2B1}}\E_{21}[\chi^B_1(i)]\bigg).\label{e:pi}
%\pi_i&=\E[\tau]^{-1}\bigg(\E[\xi_{1B1}]\E_{11}[\chi^B_1(V^B_{i-r+2})]+\frac{p_{1B2}}{p_{1A2}+p_{1B2}}\E_{12}[\chi^B_2(V^B_{i-r+2})]\notag\\
%&\quad+\E[\xi_{2B2}]\E_{22}[\chi^B_2(V^B_{i-r+2})]+\frac{p_{2B1}}{p_{2A1}+p_{2B1}}\E_{21}[\chi^B_1(V^B_{i-r+2})]\bigg).\label{e:pi}
\end{align}

The parameter of the nonnegative geometric random variable $\xi_{1A1}+\xi_{1B1}$ is
\begin{align}\label{e:parxi1A1+xi1B1}
p_{1A2}+p_{1B2}=(q^A_{1A2}+q^B_{1B2})(q_{1A}+q_{1B})^{-1},
\end{align}
the second form is ensured by Eq.~\eqref{e:p1A2}. The geometric random variable is subdivided into two by an independent binomial variable. Hence, also employing
\[p_{1A1}+p_{1B1}+p_{1A2}+p_{1B2}=1\]
and Eqs.~\eqref{e:p1A1} and~\eqref{e:parxi1A1+xi1B1}, the sought means are
%the second form is ensured by Eqs.~\eqref{e:p10A2} and~\eqref{e:p1A2}. The geometric random variable is subdivided into two by an independent binomial variable. Hence, also employing Eqs.~\eqref{e:p1A1} and~\eqref{e:parxi1A1+xi1B1}, the sought means are
\begin{align*}
\E[\xi_{1A1}]&=\frac{1-(p_{1A2}+p_{1B2})}{p_{1A2}+p_{1B2}}\,\frac{p_{1A1}}{p_{1A1}+p_{1B1}}\\
&=\frac{p_{1A1}+p_{1B1}}{p_{1A2}+p_{1B2}}\,\frac{p_{1A1}}{p_{1A1}+p_{1B1}}=\frac{p_{1A1}}{p_{1A2}+p_{1B2}}\\
&=\frac{ q^A_{1A1}(q_{1A}+q_{1B})^{-1}}{(q^A_{1A2}+q^B_{1B2})(q_{1A}+q_{1B})^{-1}}=\frac{q^A_{1A1}}{q^A_{1A2}+q^B_{1B2}},
\end{align*}
and similarly,
\begin{align*}
\E[\xi_{2A2}]&=\frac{p_{2A2}}{p_{2A1}+p_{2B1}}=\frac{q^A_{2A2}}{q^A_{2A1}+q^B_{2B1}},\\
\E[\xi_{1B1}]&=\frac{p_{1B1}}{p_{1A2}+p_{1B2}}=\frac{q^B_{1B1}}{q^A_{1A2}+q^B_{1B2}},\\
\E[\xi_{2B2}]&=\frac{p_{2B2}}{p_{2A1}+p_{2B1}}=\frac{q^B_{2B2}}{q^A_{2A1}+q^B_{2B1}}.
\end{align*}

Now we substitute the expressions for $\E[\xi_{\varepsilon \delta \varepsilon}]$ ($\varepsilon\in\{1,2\}$, $\delta\in\{A,B\}$) into the formulae for~$\pi_i$, Eq.~\eqref{e:pi}. We also use the following consequences of Eqs.~\eqref{e:p1A2} and \eqref{e:parxi1A1+xi1B1}:
\begin{align*}
\frac{p_{1A2}}{p_{1A2}+p_{1B2}}&=\frac{q^A_{1A2}}{q^A_{1A2}+q^B_{1B2}},\\
\frac{p_{2A1}}{p_{2A1}+p_{2B1}}&=\frac{q^A_{2A1}}{q^A_{2A1}+q^B_{2B1}},\\
\frac{p_{1B2}}{p_{1A2}+p_{1B2}}&=\frac{q^B_{1B2}}{q^A_{1A2}+q^B_{1B2}},\\
\frac{p_{2B1}}{p_{2A1}+p_{2B1}}&=\frac{q^B_{2B1}}{q^A_{2A1}+q^B_{2B1}}.
\end{align*}

For $i\in\mathcal{S}^A$,%\{1,2,\dots,r-2\}$,
\begin{align*}
\pi_i&=\E[\tau]^{-1}\bigg(\frac{q^A_{1A1}}{q^A_{1A2}+q^B_{1B2}}\E_{11}[\chi^A_1(i)]+\frac{q^A_{1A2}}{q^A_{1A2}+q^B_{1B2}}\E_{12}[\chi^A_2(i)]\\
&\quad+\frac{q^A_{2A2}}{q^A_{2A1}+q^B_{2B1}}\E_{22}[\chi^A_2(i)]+\frac{q^A_{2A1}}{q^A_{2A1}+q^B_{2B1}}\E_{21}[\chi^A_1(i)]\bigg),\\
\pi_1&=\E[\tau]^{-1}\bigg(\frac{q^A_{1A1}+q^B_{1B1}}{q^A_{1A2}+q^B_{1B2}}\,\frac{1}{q_{1A}+q_{1B}}+\frac{1}{q_{1A}+q_{1B}}\bigg)=\E[\tau]^{-1}\frac{1}{q^A_{1A2}+q^B_{1B2}},\\
\pi_2&=\E[\tau]^{-1}\bigg(\frac{q^A_{2A2}+q^B_{2B2}}{q^A_{2A1}+q^B_{2B1}}\,\frac{1}{q_{2A}+q_{2B}}+\frac{1}{q_{2A}+q_{2B}}\bigg)=\E[\tau]^{-1}\frac{1}{q^A_{2A1}+q^B_{2B1}},
%\pi_i&=\E[\tau]^{-1}\bigg(\frac{q^A_{1A1}}{q^A_{1A2}+q^B_{1B2}}\E_{11}[\chi^A_1(V^A_i)]+\frac{q^A_{1A2}}{q^A_{1A2}+q^B_{1B2}}\E_{12}[\chi^A_2(V^A_i)]\\
%&\quad+\frac{q^A_{2A2}}{q^A_{2A1}+q^B_{2B1}}\E_{22}[\chi^A_2(V^A_i)]+\frac{q^A_{2A1}}{q^A_{2A1}+q^B_{2B1}}\E_{21}[\chi^A_1(V^A_i)]\bigg),\\
%\pi_{r-1}&=\E[\tau]^{-1}\bigg(\frac{q^A_{1A1}+q^B_{1B1}}{q^A_{1A2}+q^B_{1B2}}\,\frac{1}{q_{1A}+q_{1B}}+\frac{1}{q_{1A}+q_{1B}}\bigg)=\E[\tau]^{-1}\frac{1}{q^A_{1A2}+q^B_{1B2}},\\
%\pi_r&=\E[\tau]^{-1}\bigg(\frac{q^A_{2A2}+q^B_{2B2}}{q^A_{2A1}+q^B_{2B1}}\,\frac{1}{q_{2A}+q_{2B}}+\frac{1}{q_{2A}+q_{2B}}\bigg)=\E[\tau]^{-1}\frac{1}{q^A_{2A1}+q^B_{2B1}},
\end{align*}
and for $i\in\mathcal{S}^B$,%\{r+1,r+2,\dots,r+s-2\}$,
\begin{align*}
\pi_i&=\E[\tau]^{-1}\bigg(\frac{q^B_{1B1}}{q^A_{1A2}+q^B_{1B2}}\E_{11}[\chi^B_1(i)]+\frac{q^B_{1B2}}{q^A_{1A2}+q^B_{1B2}}\E_{12}[\chi^B_2(i)]\\
&\quad+\frac{q^B_{2B2}}{q^A_{2A1}+q^B_{2B1}}\E_{22}[\chi^B_2(i)]+\frac{q^B_{2B1}}{q^A_{2A1}+q^B_{2B1}}\E_{21}[\chi^B_1(i)]\bigg).
%\pi_i&=\E[\tau]^{-1}\bigg(\frac{q^B_{1B1}}{q^A_{1A2}+q^B_{1B2}}\E_{11}[\chi^B_1(V^B_{i-r+2})]+\frac{q^B_{1B2}}{q^A_{1A2}+q^B_{1B2}}\E_{12}[\chi^B_2(V^B_{i-r+2})]\\
%&\quad+\frac{q^B_{2B2}}{q^A_{2A1}+q^B_{2B1}}\E_{22}[\chi^B_2(V^B_{i-r+2})]+\frac{q^B_{2B1}}{q^A_{2A1}+q^B_{2B1}}\E_{21}[\chi^B_1(V^B_{i-r+2})]\bigg).
\end{align*}
Consequently, for $i\in\mathcal{S}^A$,%\{1,2,\dots,r-2\}$,
\begin{align}\notag
\pi_i&=\pi_1 q^A_{1A1}\E_{11}[\chi^A_1(i)]+\pi_1 q^A_{1A2}\E_{12}[\chi^A_2(i)]\\[6pt]
&\quad+\pi_2 q^A_{2A2}\E_{22}[\chi^A_2(i)]+\pi_2 q^A_{2A1}\E_{21}[\chi^A_1(i)],\label{eq:2343}
%\pi_i&=\pi_{r-1}q^A_{1A1}\E_{11}[\chi^A_1(V^A_i)]+\pi_{r-1}q^A_{1A2}\E_{12}[\chi^A_2(V^A_i)]\\[6pt]
%&\quad+\pi_r q^A_{2A2}\E_{22}[\chi^A_2(V^A_i)]+\pi_r q^A_{2A1}\E_{21}[\chi^A_1(V^A_i)],\label{eq:2343}
\end{align}
and for $i\in\mathcal{S}^B$,%\{r+1,r+2,\dots,r+s-2\}$,
\begin{align}\notag
\pi_i&=\pi_1 q^B_{1B1}\E_{11}[\chi^B_1(i)]+\pi_1 q^B_{1B2}\E_{12}[\chi^B_2(i)]\\[6pt]
&\quad+\pi_2 q^B_{2B2}\E_{22}[\chi^B_2(i)]+\pi_2 q^B_{2B1}\E_{21}[\chi^B_1(i)].\label{eq:2344}
\end{align}
It also follows that
\begin{equation}\label{e:pirm1pir}
\frac{\pi_1}{\pi_2}=\frac{q^A_{2A1}+q^B_{2B1}}{q^A_{1A2}+q^B_{1B2}}.
%\frac{\pi_{r-1}}{\pi_r}=\frac{q^A_{2A1}+q^B_{2B1}}{q^A_{1A2}+q^B_{1B2}}.
\end{equation}

\subsection{Combining the preceding}

With the vectors $v,w\in[0,\infty[^{r-2}$, $x,y\in[0,\infty[^{s-2}$ introduced in Section~\ref{s:mainresult}, using Eq.~\eqref{eq:4321} for the first two equalities and Eqs. \eqref{eq:2343} and~\eqref{eq:2344} for the third and fourth,
\begin{align}
(\pi^A_{-r+3},\pi^A_{-r+4},\dots,\pi^A_0)\T&=\pi^A_1 v+\pi^A_2 w&&\textrm{for }\mathcal{X}^A,\label{e:piAoncaligVA}\\[6pt]
(\pi^B_3,\pi^B_4,\dots,\pi^B_s)\T&=\pi^B_1 x+\pi^B_2 y&&\textrm{for }\mathcal{X}^B,\notag\\[6pt]
(\pi_{-r+3},\pi_{-r+4},\dots,\pi_0)\T&=\pi_1 v+\pi_2 w&&\textrm{and}\label{e:pi1pirm2}\\[6pt]
(\pi_3,\pi_4,\dots,\pi_s)\T&=\pi_1 x+\pi_2 y&&\textrm{for }\mathcal{X}.\label{e:pirp1rpsm2}
%(\pi^A_1,\pi^A_2,\dots,\pi^A_{r-2})\T&=\pi^A_{r-1}v+\pi^A_r w&&\textrm{for }\mathcal{X}^A,\label{e:piAoncaligVA}\\[6pt]
%(\pi^B_3,\pi^B_4,\dots,\pi^B_s)\T&=\pi^B_1 x+\pi^B_2 y&&\textrm{for }\mathcal{X}^B,\notag\\[6pt]
%(\pi_1,\pi_2,\dots,\pi_{r-2})\T&=\pi_{r-1} v+\pi_r w&&\textrm{and}\label{e:pi1pirm2}\\[6pt]
%(\pi_{r+1},\pi_{r+2},\dots,\pi_{r+s-2})\T&=\pi_{r-1} x+\pi_r y&&\textrm{for }\mathcal{X}.\label{e:pirp1rpsm2}
\end{align}
This already shows Eqs.~\eqref{e:pioncaligSA} and~\eqref{e:pioncaligSB} of the theorem. Eq.~\eqref{e:pionS1S2} is a consequence of Eq.~\eqref{e:pirm1pir}, while Eq.~\eqref{e:pisumsto1} must anyway be true.

Let us now show the last two statements of the theorem. The following derivation arrives at an explicit solution, thereby showing the uniqueness of~$\pi$. By summing all entries of Eqs.~\eqref{e:pi1pirm2} and~\eqref{e:pirp1rpsm2}, we get
\begin{align*}
\pi_1 \bigg(\sum_i v_i+\sum_i x_i\bigg)+\pi_2 \bigg(\sum_i w_i+\sum_i y_i\bigg)&=\sum_{i\neq 1,2}\pi_i=1-\pi_1-\pi_2,
\end{align*}
which is equivalent to
\begin{align*}
\pi_1 \bigg(\sum_i v_i+\sum_i x_i +1\bigg)+\pi_2 \bigg(\sum_i w_i+\sum_i y_i +1\bigg)&=1.
\end{align*}
The substitution of $\pi_2$ from Eq.~\eqref{e:pirm1pir} yields
\begin{align*}
\pi_1 \left(\sum_i v_i+\sum_i x_i +1+\frac{q^A_{1A2}+q^B_{1B2}}{q^A_{2A1}+q^B_{2B1}}\bigg(\sum_i w_i+\sum_i y_i +1\bigg)\right)&=1,
\end{align*}
which gives $\pi_1$ of Theorem~\ref{th:main}. Eq.~\eqref{e:pirm1pir} gives~$\pi_2$. This concludes the proof.

\section{Computing $q^{\delta}_{\varepsilon \delta \varepsilon'}$ and $\E_{\varepsilon\varepsilon'}[\chi^{\delta}_{\varepsilon'}(k)]$}\label{s:recursion}

The formulae in Theorem~\ref{th:main} use unknown parameters $q^{\delta}_{\varepsilon \delta \varepsilon'}$ and $\E_{\varepsilon\varepsilon'}[\chi^{\delta}_{\varepsilon'}(k)]$ ($\delta\in\{A,B\}$, $\varepsilon,\varepsilon'\in\{1,2\}$, $k\in\mathcal{S}^{\delta}$) about the chains $\mathcal{X}^A$ and~$\mathcal{X}^B$ to express~$\pi$. However, these unknowns can be computed by linear recursions. We demonstrate the calculation for $\delta=B$ and not for~$A$, as in~$\mathcal{X}^A$, the indexing is unconventional.

As it turns out, the matrix
\[Q^B_0:=\left[\begin{array}{ccccc}Q^B_{11}&0&Q^B_{13}&\dots&Q^B_{1s}\\0&Q^B_{22}&Q^B_{23}&\dots&Q^B_{2s}\\0&0&Q^B_{33}&\dots&Q^B_{3s}\\\vdots&\vdots&\vdots&\ddots&\vdots\\0&0&Q^B_{s3}&\dots&Q^B_{ss}\end{array}\right]\]
plays a central role. We will show in Section~\ref{s:prooffullrank}:
\begin{prop}\label{th:fullrank}
$Q^B_0$ has full rank.
\end{prop}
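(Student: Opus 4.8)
The plan is to reduce the claim to the nonsingularity of a single principal submatrix, and then to establish that nonsingularity by a maximum-principle argument driven by irreducibility. First I would exploit the sparsity of the first two columns of $Q^B_0$: column~$1$ equals $Q^B_{11}$ times the first standard basis vector, and column~$2$ equals $Q^B_{22}$ times the second. Cofactor-expanding the determinant along column~$1$, and then along the (new) first column of the resulting minor, yields
\[\det Q^B_0 = Q^B_{11}\,Q^B_{22}\,\det\widetilde Q,\]
where $\widetilde Q$ is the principal submatrix of $Q^B$ on the index set $\mathcal{S}^B=\{3,4,\dots,s\}$. Since $\mathcal{X}^B$ is finite and irreducible, states $1$ and~$2$ have strictly positive total exit rates, so $Q^B_{11}=-q_{1B}\neq 0$ and $Q^B_{22}=-q_{2B}\neq 0$. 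Hence $Q^B_0$ has full rank if and only if $\det\widetilde Q\neq 0$, and the whole task collapses to proving that $\widetilde Q$ is nonsingular.

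To prove $\widetilde Q$ nonsingular I would take a real vector $z$ indexed by $\mathcal{S}^B$ with $\widetilde Q z=0$ and show $z=0$. Writing $d_k:=Q^B_{k1}+Q^B_{k2}\ge 0$ for the rate from $k$ into the absorbing pair $\{1,2\}$ and $r_k:=\sum_{j\in\mathcal{S}^B,\,j\neq k}Q^B_{kj}\ge 0$ for the rate into the rest of $\mathcal{S}^B$, the identity $-Q^B_{kk}=r_k+d_k$ turns the $k$th equation into $(r_k+d_k)z_k=\sum_{j\in\mathcal{S}^B,\,j\neq k}Q^B_{kj}z_j$. Evaluating this at an index $k$ where $z_k$ attains its maximum $M^\ast:=\max_j z_j$, and bounding the right-hand side above by $r_k M^\ast$, gives $d_k M^\ast\le 0$. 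If $M^\ast>0$ this forces $d_k=0$, and equality in the bound forces $z_j=M^\ast$ for every neighbour $j$ of $k$ inside $\mathcal{S}^B$. Thus the argmax set $S^\ast$ has no edge into $\{1,2\}$ and no edge leaving it within $\mathcal{S}^B$: it is a nonempty, proper, closed set of states of $\mathcal{X}^B$, contradicting irreducibility. The symmetric argument at $\min_j z_j$ rules out the remaining case $M^\ast\le 0$ (where, if $z\neq 0$, the minimum is strictly negative), so $z=0$ and $\widetilde Q$ is invertible.

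The main obstacle is precisely this last step. Weak diagonal dominance of $\widetilde Q$ by itself does not yield invertibility, because a maximizing state $k$ need not possess a direct transition into $\{1,2\}$, so $d_k$ may vanish and the naive Gershgorin/maximum estimate gives no strict gain. Irreducibility is what rescues the argument: it is used not merely to establish weak dominance but to \emph{propagate} the extremum along edges, forcing the emergence of a closed communicating class that an irreducible chain cannot contain. I would also remark that this elementary step can be replaced by the standard fact that $-\widetilde Q$ is a nonsingular M-matrix, or by the probabilistic observation that $-\widetilde Q^{\,-1}$ is the matrix of expected occupation times of $\mathcal{S}^B$ before absorption in $\{1,2\}$ (finite by irreducibility) — the very Green's-function quantities underlying the variables $\chi^B_{\varepsilon'}(k)$ used elsewhere in the paper — but the maximum-principle proof has the advantage of being short and self-contained.
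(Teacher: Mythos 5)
Your proof is correct, but it takes a genuinely different route from the paper's. The paper proves that $Q^B_0$ is surjective by reusing its probabilistic machinery: the occupation-time vectors $u(j)$ of Proposition~\ref{th:EchiB} are explicit, probabilistically constructed solutions of $Q^B_0 u(j)=-p^B_{jB\varepsilon'}\mathbf{e}^B_j$, and since irreducibility forbids $p^B_{jB1}$ and $p^B_{jB2}$ from vanishing simultaneously, every $\mathbf{e}^B_j$ with $j\ge 3$ lies in the image of $Q^B_0$; combined with the observation that $\mathbf{e}^B_1$ and $\mathbf{e}^B_2$ are eigenvectors with nonzero eigenvalues $Q^B_{11}$ and $Q^B_{22}$, the image is all of $\mathbb{R}^s$. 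This is precisely why the paper insists on deriving existence in Propositions~\ref{P:Qede} and~\ref{th:EchiB} from the probabilistic interpretation rather than from invertibility: full rank is \emph{deduced from} that existence, and then feeds back to give uniqueness. Your argument --- factoring $\det Q^B_0=Q^B_{11}Q^B_{22}\det\widetilde{Q}$ and then killing the kernel of the principal submatrix $\widetilde{Q}$ by propagating an extremum along edges until a closed proper subset of states emerges, contradicting irreducibility --- is purely linear-algebraic and completely independent of those two propositions. What your route buys: it dissolves the delicate logical interleaving the paper needs (with your proof available first, existence \emph{and} uniqueness in both propositions would follow at once from invertibility), and it generalizes to the principal submatrix obtained by deleting any nonempty set of states of an irreducible generator, as in the standard M-matrix fact you mention. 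What the paper's route buys: it is essentially free once Proposition~\ref{th:EchiB} is in place, and it stays inside the excursion/occupation-time picture that the rest of the paper is built on. Your details are sound: $Q^B_{11},Q^B_{22}\neq 0$ from positive exit rates, the equality case forcing the argmax set to be closed both towards $\{1,2\}$ and within $\mathcal{S}^B$, and the reduction of the negative-minimum case to the symmetric argument.
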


\subsection{The probabilities of leaving on different excursions and $q^{\delta}_{\varepsilon \delta \varepsilon'}$}

In order to establish the values $q^{\delta}_{\varepsilon \delta \varepsilon'}$, we calculate the corresponding $p^{\delta}_{\varepsilon \delta \varepsilon'}$, and the conversion follows from Eq.~\eqref{e:defqdede}.
%We demonstrate the calculation for $\delta=B$ and not for~$A$, as in~$\mathcal{X}^A$, the glued states $S_1, S_2$ correspond to $V^A_{r-1}, V^A_r$ instead of $V^A_1, V^A_2$, whereas the quantities of interest are indexed with $\varepsilon,\varepsilon'\in\{1,2\}$, potentially causing confusion.
We extend the usage of $p^B_{\varepsilon B \varepsilon'}$ to any starting point $i\in\{3,4,\dots,s\}$: for the hitting time $\sigma^B_{\varepsilon'}$ of the state $\varepsilon'\in\{1,2\}$,
\[p^B_{iB\varepsilon'}:=\P(\sigma^B_{\varepsilon'} < \sigma^B_{3-\varepsilon'}\ |\ \mathcal{X}^B_0=i).\]

\begin{prop}\label{P:Qede}
$p^B_{\varepsilon B \varepsilon'}$ ($\varepsilon,\varepsilon'\in\{1,2\}$) can be computed as the unique solutions of the following linear equations:
\begin{align}
Q^B_0 \left(\begin{array}{c}p^B_{1 B 1}\\p^B_{2 B 1}\\p^B_{3 B 1}\\\vdots\\p^B_{s B 1}\end{array}\right)&=\left(\begin{array}{c}0\\-Q^B_{21}\\-Q^B_{31}\\\vdots\\-Q^B_{s1}\end{array}\right),&
Q^B_0 \left(\begin{array}{c}p^B_{1 B 2}\\p^B_{2 B 2}\\p^B_{3 B 2}\\\vdots\\p^B_{s B 2}\end{array}\right)&=\left(\begin{array}{c}-Q^B_{12}\\0\\-Q^B_{32}\\\vdots\\-Q^B_{s2}\end{array}\right).\label{e:pB1}% \label{e:pB2}
\end{align}
In practice, it suffices to solve only one of the two linear equations and use an analogue of Eq.~\eqref{e:totalA}. 
\end{prop}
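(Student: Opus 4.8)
The plan is to identify each $p^B_{iB\varepsilon'}$ as a hitting probability of $\mathcal{X}^B$ and to derive the two linear systems by first-step analysis, conditioning on the destination of the first jump. By irreducibility and finiteness, from any state the chain reaches $\{1,2\}$ in finite time almost surely, so $\min(\sigma^B_1,\sigma^B_2)<\infty$ almost surely and the first-step decomposition is legitimate. I would establish the system for $\varepsilon'=1$ in full and obtain the $\varepsilon'=2$ system by interchanging the roles of $1$ and $2$.

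For an interior state $i\in\{3,\dots,s\}$ the quantity $h_i:=p^B_{iB1}$ is the ordinary probability of hitting $1$ before $2$, and the generator applied to $h$ vanishes off $\{1,2\}$, i.e. $\sum_j Q^B_{ij}h_j=0$. Imposing the boundary values $h_1=1$, $h_2=0$ and moving the $j\in\{1,2\}$ terms to the right gives $\sum_{j\ge 3}Q^B_{ij}p^B_{jB1}=-Q^B_{i1}$, which is row $i$ of the system. This is precisely why the first two columns of $Q^B_0$ carry no off-diagonal entries in rows $i\ge 3$: the effect of landing on $1$ or $2$ is absorbed into the right-hand side.

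The delicate part, and the step needing the most care, is the two boundary rows, because $\sigma^B_\varepsilon$ is a \emph{return} time when $\mathcal{X}^B_0=\varepsilon$ by Eq.~\eqref{e:defsigde}; thus $p^B_{1B1}$ and $p^B_{2B1}$ are genuine unknowns rather than fixed boundary data. Conditioning on the first jump out of state~$1$ (which lands on $j$ with probability $Q^B_{1j}/q_{1B}$), a jump to~$2$ is an immediate failure while a jump to $j\ge 3$ succeeds with probability $p^B_{jB1}$; multiplying through by $q_{1B}=-Q^B_{11}$ yields $Q^B_{11}p^B_{1B1}+\sum_{j\ge 3}Q^B_{1j}p^B_{jB1}=0$, which is row~$1$. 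The same argument from state~$2$, where a jump to~$1$ is now an immediate success and contributes $-Q^B_{21}$ to the right-hand side, produces row~$2$. This accounts for the retained diagonal entries $Q^B_{11},Q^B_{22}$ alongside the zeroed cross-terms, and reproduces the left system of Eq.~\eqref{e:pB1}.

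Uniqueness is then immediate: by Proposition~\ref{th:fullrank} the matrix $Q^B_0$ has full rank, hence is invertible, so each system has exactly one solution, which is therefore the vector of hitting probabilities. For the closing remark, since the chain reaches $\{1,2\}$ almost surely and hits one of the two states strictly first (ties have probability zero), the events $\{\sigma^B_1<\sigma^B_2\}$ and $\{\sigma^B_2<\sigma^B_1\}$ partition the sample space, so $p^B_{iB1}+p^B_{iB2}=1$ for every~$i$, the analogue of Eq.~\eqref{e:totalA}. Solving either one of the two systems thus determines the other vector by entrywise complementation.
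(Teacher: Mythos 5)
Your proposal is correct and follows essentially the same route as the paper: a first-step (one-jump) decomposition yielding the rows of Eq.~\eqref{e:pB1} — with the same careful distinction that rows $1$ and $2$ involve return rather than hitting probabilities — existence supplied by the probabilistic interpretation, uniqueness by the full rank of $Q^B_0$ from Proposition~\ref{th:fullrank}, and the complementation $p^B_{iB1}+p^B_{iB2}=1$ for the closing remark. The only cosmetic difference is that the paper phrases the interior-state equations as recursions over the embedded jump-chain probabilities $p^B_{ij}=Q^B_{ij}/(-Q^B_{ii})$ rather than via the generator with boundary values; the content is identical.
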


\begin{proof}
We introduce notation for probabilities of direct transitions:
\[p^B_{ij}:=\frac{Q^B_{ij}}{-Q^B_{ii}}\qquad(i,j\in\{1,2,\dots,s\},\ i\neq j)\]
% do it with cases and introduce p^B_{ii}:=0 as well?
(cf. Eq.~\eqref{e:defqdede}). Then the following relations hold:
\begin{align}
p^B_{\varepsilon' B\varepsilon'}&=\sum_{j\notin\{\varepsilon',3-\varepsilon'\}}p^B_{\varepsilon' j}p^B_{j B\varepsilon'},\label{e:pBrecursion1}\\
p^B_{3-\varepsilon', B,\varepsilon'}&=\sum_{j\notin\{\varepsilon',3-\varepsilon'\}}p^B_{3-\varepsilon', j}p^B_{j B\varepsilon'}+p^B_{3-\varepsilon',\varepsilon'},\label{e:pBrecursion2}\\
p^B_{i B\varepsilon'}&=\sum_{j\notin\{\varepsilon',3-\varepsilon',i\}}p^B_{ij}p^B_{j B\varepsilon'}+p^B_{i\varepsilon'}\qquad (i\in\{3,4,\dots,s\}).\label{e:pBrecursion3}
\end{align}
For instance, Eq.~\eqref{e:pBrecursion2} is equivalent to
\[p^B_{3-\varepsilon', B,\varepsilon'}=\sum_{j\notin\{\varepsilon',3-\varepsilon'\}}\frac{Q^B_{3-\varepsilon', j}}{-Q^B_{3-\varepsilon',3-\varepsilon'}}\,p^B_{j B\varepsilon'}+\frac{Q^B_{3-\varepsilon',\varepsilon'}}{-Q^B_{3-\varepsilon',3-\varepsilon'}}.\]
After multiplying both sides by $-Q^B_{3-\varepsilon',3-\varepsilon'}$ and moving both individual terms to the other side,
\[-Q^B_{3-\varepsilon',\varepsilon'}=\sum_{j\neq\varepsilon'}Q^B_{3-\varepsilon', j} p^B_{j B\varepsilon'}.\]
This gives, for $\varepsilon'\in\{1,2\}$,
\begin{align*}
-Q^B_{21}&=[0\ Q^B_{22}\ Q^B_{23}\ \dots\ Q^B_{2s}]\,(p^B_{1 B 1},p^B_{2 B 1},p^B_{3 B 1},\dots,p^B_{s B 1})\T,\\[6pt]
-Q^B_{12}&=[Q^B_{11}\ 0\ Q^B_{13}\ \dots\ Q^B_{1s}]\,(p^B_{1 B 2},p^B_{2 B 2},p^B_{3 B 2},\dots,p^B_{s B 2})\T.
\end{align*}
The same steps of manipulation on Eq.~\eqref{e:pBrecursion1} yield
\begin{align*}
0&=[Q^B_{11}\ 0\ Q^B_{13}\ \dots\ Q^B_{1s}]\,(p^B_{1 B 1},p^B_{2 B 1},p^B_{3 B 1},\dots,p^B_{s B 1})\T,\\[6pt]
0&=[0\ Q^B_{22}\ Q^B_{23}\ \dots\ Q^B_{2s}]\,(p^B_{1 B 2},p^B_{2 B 2},p^B_{3 B 2},\dots,p^B_{s B 2})\T,
\end{align*}
and when carried out on Eq.~\eqref{e:pBrecursion3}, then
\begin{align*}
-Q^B_{i1}&=[0\ 0\ Q^B_{i3}\ \dots\ Q^B_{is}]\,(p^B_{1 B 1},p^B_{2 B 1},p^B_{3 B 1},\dots,p^B_{s B 1})\T,\\[6pt]
-Q^B_{i2}&=[0\ 0\ Q^B_{i3}\ \dots\ Q^B_{is}]\,(p^B_{1 B 2},p^B_{2 B 2},p^B_{3 B 2},\dots,p^B_{s B 2})\T,
\end{align*}
for $i\in\{3,4,\dots,s\}$. The last three displays are exactly as the two claimed linear equations in the assertion. The existence of a solution follows from the underlying probabilistic interpretation (i.e. the probabilities $(p^B_{iB\varepsilon'})_{i\in\{1,2,\dots,s\}}$ for $\varepsilon'\in\{1,2\}$ are particular solutions). Uniqueness also holds since $Q^B_0$ does not have a nontrivial nullvector, as proven in Proposition~\ref{th:fullrank}.
\end{proof}

\subsection{The expected total time spent in individual states on excursions, $\E_{\varepsilon\varepsilon'}[\chi^{\delta}_{\varepsilon'}(k)]$}

%We need a straightforward extension of $\chi^B_{\varepsilon B \varepsilon'}$ to any starting point~$i$. For a state $V^B_j\in\mathcal{V}^B\setminus\{V^B_1,V^B_2\}$ on the restricted sample space $\{\mathcal{X}^B_0=V^B_i\}\cap\{\sigma^B_{\varepsilon'}<\sigma^B_{3-\varepsilon'}\}$, let
%\[\chi^B_{i B \varepsilon'}(V^B_j):=\int_0^{\sigma^B_{\varepsilon'}}\mathbf{1} (\mathcal{X}^B_t=V^B_j)\id t.\]

Again, the computation of $\E_{\varepsilon\varepsilon'}[\chi^{\delta}_{\varepsilon'}(k)]$ requires the usage of the matrix~$Q_0^B$. First, we need to extend its meaning to a general starting point $i\in\{3,4,\dots,s\}$:
\[\E_{i\varepsilon'}[\chi^B_{\varepsilon'}(k)]=\E_i\Big[\chi^B_{\varepsilon'}(k)\, \Big|\, \sigma^B_{\varepsilon'}<\sigma^B_{3-\varepsilon'}\Big],\]
which is again defined to be zero when the condition has zero probability.

\begin{prop}\label{th:EchiB}
$\E_{\varepsilon\varepsilon'}[\chi^B_{\varepsilon'}(j)]$ ($\varepsilon,\varepsilon'\in\{1,2\}$, $j\in\{3,4,\dots,s\}$) can be computed from the unique solutions $u(j)\in[0,\infty[^s$ of the following linear equations:
\begin{align}
Q^B_0 u(j)&=-p^B_{jB\varepsilon'}\,\mathbf{e}^B_j\qquad(\varepsilon'\in\{1,2\}),\label{e:EchiB}
%Q^B_0 \left(\begin{array}{c}\E_{1\varepsilon'}[\chi^B_{\varepsilon'}(j)]\\\E_{2\varepsilon'}[\chi^B_{\varepsilon'}(j)]\\\vdots\\\E_{s\varepsilon'}[\chi^B_{\varepsilon'}(j)]\end{array}\right)&=-\mathbf{1}(p^B_{jB\varepsilon'}>0)\,\mathbf{e}^B_j\qquad(\varepsilon'\in\{1,2\}),\label{e:EchiB}
\end{align}
where $\mathbf{e}^B_j\in\mathbb{R}^s$ is the canonical $j$th unit vector. If $u_i(j)=0$, then $\E_{i\varepsilon'}[\chi^B_{\varepsilon'}(j)]=0$. If $u_i(j)>0$, then
\[\E_{i\varepsilon'}[\chi^B_{\varepsilon'}(j)]=\frac{u_i(j)}{p^B_{iB\varepsilon'}}.\]
\end{prop}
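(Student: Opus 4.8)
The plan is to fix the target $\varepsilon'\in\{1,2\}$ and introduce the \emph{unconditional} restricted occupation times
\[
u_i(j):=\E_i\big[\chi^B_{\varepsilon'}(j)\,\mathbf{1}(\sigma^B_{\varepsilon'}<\sigma^B_{3-\varepsilon'})\big]\qquad(i\in\{1,2,\dots,s\}),
\]
so that $u(j)=(u_1(j),\dots,u_s(j))\T$ is the very vector whose defining equation is asserted. By the definition of the conditional expectation in Section~\ref{s:mainresult}, $\E_{i\varepsilon'}[\chi^B_{\varepsilon'}(j)]=u_i(j)/p^B_{iB\varepsilon'}$ whenever $p^B_{iB\varepsilon'}=\P_i(\sigma^B_{\varepsilon'}<\sigma^B_{3-\varepsilon'})>0$. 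Hence the proof reduces to two tasks: first, to show that $u(j)$ is the unique solution of the stated linear system; second, to read off $\E_{i\varepsilon'}[\chi^B_{\varepsilon'}(j)]$ from $u_i(j)$.

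For the first task I would run a first-step analysis on $u_i(j)$, conditioning on the initial sojourn in $i$ and on the first jump. The key probabilistic fact is that the sojourn time in~$i$ is exponential with mean $1/(-Q^B_{ii})$ and independent of the embedded jump chain, while the success event $\{\sigma^B_{\varepsilon'}<\sigma^B_{3-\varepsilon'}\}$ is measurable with respect to the jump chain alone, so the expectation of the product factorises. For an interior start $i\in\{3,\dots,s\}$ the initial sojourn contributes $\mathbf{1}(i=j)\,p^B_{iB\varepsilon'}/(-Q^B_{ii})$; a first jump to $\varepsilon'$ (which ends the excursion, with $j\neq\varepsilon'$) or to $3-\varepsilon'$ (which violates success) contributes nothing to the future, and a first jump to an interior $\ell$ contributes $p^B_{i\ell}\,u_\ell(j)$ by the strong Markov property. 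For a boundary start $i\in\{1,2\}$ the sojourn term is absent because $j\neq i$. Writing $p^B_{i\ell}=Q^B_{i\ell}/(-Q^B_{ii})$ as in the proof of Proposition~\ref{P:Qede}, this gives, for interior $i$,
\[
u_i(j)=\mathbf{1}(i=j)\,\frac{p^B_{iB\varepsilon'}}{-Q^B_{ii}}+\sum_{\ell\in\{3,\dots,s\}\setminus\{i\}}p^B_{i\ell}\,u_\ell(j),
\]
and for $i\in\{1,2\}$ the same identity with the indicator term dropped and the sum running over $\ell\in\{3,\dots,s\}$.

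It then remains to recognise these scalar equations as the rows of $Q^B_0 u(j)=-p^B_{jB\varepsilon'}\,\mathbf{e}^B_j$. Substituting $p^B_{i\ell}=Q^B_{i\ell}/(-Q^B_{ii})$, multiplying through by $-Q^B_{ii}$ and folding the diagonal term back into the sum turns the interior rows into $\sum_{\ell=3}^s Q^B_{i\ell}u_\ell(j)=-\mathbf{1}(i=j)\,p^B_{iB\varepsilon'}$ and the boundary rows into $Q^B_{ii}u_i(j)+\sum_{\ell=3}^s Q^B_{i\ell}u_\ell(j)=0$. The crucial observation is that the exclusion of $\ell\in\{1,2\}$ from the interior sums, together with the vanishing sojourn term for the two boundary starts, is exactly the zero pattern built into $Q^B_0$ (columns $1,2$ deleted in rows $3,\dots,s$, and the cross-entries $Q^B_{12},Q^B_{21}$ replaced by~$0$ in rows~$1,2$); and since $j\geq 3$ and $p^B_{iB\varepsilon'}=p^B_{jB\varepsilon'}$ when $i=j$, the right-hand side is precisely $-p^B_{jB\varepsilon'}\,\mathbf{e}^B_j$. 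Existence of a solution is immediate because the probabilistically defined $u(j)\in[0,\infty[^s$ satisfies the system, and uniqueness follows from the full rank of $Q^B_0$ established in Proposition~\ref{th:fullrank}.

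Finally, for the conversion I would note that $u_i(j)>0$ forces $\P_i(\sigma^B_{\varepsilon'}<\sigma^B_{3-\varepsilon'})>0$, so dividing by $p^B_{iB\varepsilon'}$ is legitimate and yields $\E_{i\varepsilon'}[\chi^B_{\varepsilon'}(j)]$, whereas $u_i(j)=0$ (which in particular covers $p^B_{iB\varepsilon'}=0$, where the indicator vanishes almost surely) corresponds to $\E_{i\varepsilon'}[\chi^B_{\varepsilon'}(j)]=0$ by the convention fixed in Section~\ref{s:mainresult}. I expect the main obstacle to be the careful bookkeeping of the first-step decomposition: one must treat the target, the taboo state and the interior starts separately, correctly assign zero future contribution at both absorbing outcomes, and verify that the resulting omission of states $1,2$ matches the structural zeros of $Q^B_0$ exactly---this matching being what lets the same matrix $Q^B_0$ serve here as in Proposition~\ref{P:Qede}.
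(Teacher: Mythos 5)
Your proposal is correct and follows essentially the same route as the paper's proof: both define the unconditional restricted occupation times $u_i(j)=\E_i[\chi^B_{\varepsilon'}(j)\,\mathbf{1}(\sigma^B_{\varepsilon'}<\sigma^B_{3-\varepsilon'})]$, derive the linear system by first-step analysis (your single interior equation with the indicator $\mathbf{1}(i=j)$ just merges the paper's Eqs.~\eqref{e:EchiBrecursion3} and~\eqref{e:EchiBrecursion4}, and your boundary equations are Eqs.~\eqref{e:EchiBrecursion1} and~\eqref{e:EchiBrecursion2}), match it to the zero pattern of $Q^B_0$, and get existence from the probabilistic solution and uniqueness from Proposition~\ref{th:fullrank} before dividing by $p^B_{iB\varepsilon'}$. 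Your explicit justification of the factor $p^B_{jB\varepsilon'}/(-Q^B_{jj})$ via independence of the holding time from the embedded jump chain is exactly the point the paper makes when explaining why the mean waiting time is multiplied by $p^B_{jB\varepsilon'}$.
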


%$\E_{\varepsilon\varepsilon'}[\chi^B_{\varepsilon'}(j)]$ ($\varepsilon,\varepsilon'\in\{1,2\}$, $j\in\{3,4,\dots,s\}$) can be computed as the unique solutions of the following linear equations:
%\begin{align}
%Q^B_0 \left(\begin{array}{c}\E_{1\varepsilon'}[\chi^B_{\varepsilon'}(j)]\\\E_{2\varepsilon'}[\chi^B_{\varepsilon'}(j)]\\\vdots\\\E_{s\varepsilon'}[\chi^B_{\varepsilon'}(j)]\end{array}\right)&=-\mathbf{1}(p^B_{jB\varepsilon'}>0)\,\mathbf{e}^B_j\qquad(\varepsilon'\in\{1,2\}),\label{e:EchiB}
%\end{align}
%where $\mathbf{e}^B_j\in\mathbb{R}^s$ is the canonical $j$th unit vector.
%\end{prop}

\begin{proof}
Fix $j\in\{3,4,\dots,s\}$. We will see that we can write down linear relationships among \[u_i(j)=\E_{i}[\chi^B_{\varepsilon'}(j) \mathbf{1}(\sigma^B_{\varepsilon'}<\sigma^B_{3-\varepsilon'})]=p^B_{iB\varepsilon'} \E_{i\varepsilon'}[\chi^B_{\varepsilon'}(j)]\]
($i\in\{1,2,\dots,s\}$) most naturally. We note that if $p^B_{iB\varepsilon'}=0$, then $\E_{i\varepsilon'}[\chi^B_{\varepsilon'}(j)]=0$ by the definition of the latter as a conditional expectation that is zero when the condition has zero probability.% ($p^B_{jB\varepsilon'}=0$ is also a sufficient condition, but that will play no direct role.)

%As we are working with expectations conditioned on $(\sigma^B_{\varepsilon'}<\sigma^B_{3-\varepsilon'})$, 
% the conditional probability field
%\E_{\varepsilon}[\chi^{\delta}_{\varepsilon'}(V) \mathbf{1}(\sigma^{\delta}_{\varepsilon'}<\sigma^{\delta}_{3-\varepsilon'})]

%The expectations $\E_{\varepsilon\varepsilon'}[\chi^B_{\varepsilon'}(j)]$ ($\varepsilon,\varepsilon'\in\{1,2\}$) are implicitly given by
We can observe the following relationships:
\begin{align}
\E_{\varepsilon'}[\chi^B_{\varepsilon'}(j) \mathbf{1}(\sigma^B_{\varepsilon'}<\sigma^B_{3-\varepsilon'})]&=\sum_{k\notin\{\varepsilon',3-\varepsilon'\}}p^B_{\varepsilon' k}\,\E_{k}[\chi^B_{\varepsilon'}(j) \mathbf{1}(\sigma^B_{\varepsilon'}<\sigma^B_{3-\varepsilon'})],\label{e:EchiBrecursion1}\\[6pt]
\E_{3-\varepsilon'}[\chi^B_{\varepsilon'}(j) \mathbf{1}(\sigma^B_{\varepsilon'}<\sigma^B_{3-\varepsilon'})]&=\sum_{k\notin\{\varepsilon',3-\varepsilon'\}}p^B_{3-\varepsilon', k}\,\E_{k}[\chi^B_{\varepsilon'}(j) \mathbf{1}(\sigma^B_{\varepsilon'}<\sigma^B_{3-\varepsilon'})],\label{e:EchiBrecursion2}\\[6pt]
\E_{i}[\chi^B_{\varepsilon'}(j) \mathbf{1}(\sigma^B_{\varepsilon'}<\sigma^B_{3-\varepsilon'})]&=\sum_{k\notin\{\varepsilon',3-\varepsilon',i\}}p^B_{ik}\,\E_{k}[\chi^B_{\varepsilon'}(j) \mathbf{1}(\sigma^B_{\varepsilon'}<\sigma^B_{3-\varepsilon'})]\quad(i\notin\{1,2,j\}),\label{e:EchiBrecursion3}\\[6pt]
\E_{j}[\chi^B_{\varepsilon'}(j) \mathbf{1}(\sigma^B_{\varepsilon'}<\sigma^B_{3-\varepsilon'})]&=p^B_{jB\varepsilon'}\,\frac{1}{-Q^B_{jj}}+\sum_{k\notin\{\varepsilon',3-\varepsilon',j\}}p^B_{jk}\,\E_{k}[\chi^B_{\varepsilon'}(j) \mathbf{1}(\sigma^B_{\varepsilon'}<\sigma^B_{3-\varepsilon'})].\label{e:EchiBrecursion4}
%\E_{\varepsilon'\varepsilon'}[\chi^B_{\varepsilon'}(j)]&=\sum_{k\notin\{\varepsilon',3-\varepsilon'\}}p^B_{\varepsilon' k}\E_{k\varepsilon'}[\chi^B_{\varepsilon'}(j)],\label{e:EchiBrecursion1}\\
%\E_{3-\varepsilon',\varepsilon'}[\chi^B_{\varepsilon'}(j)]&=\sum_{k\notin\{\varepsilon',3-\varepsilon'\}}p^B_{3-\varepsilon', k}\E_{k\varepsilon'}[\chi^B_{\varepsilon'}(j)],\label{e:EchiBrecursion2}\\
%\E_{i\varepsilon'}[\chi^B_{\varepsilon'}(j)]&=\sum_{k\notin\{\varepsilon',3-\varepsilon',i\}}p^B_{ik}\E_{k\varepsilon'}[\chi^B_{\varepsilon'}(j)]\qquad(i\notin\{1,2,j\}),\label{e:EchiBrecursion3}\\
%\E_{j\varepsilon'}[\chi^B_{\varepsilon'}(j)]&=\mathbf{1}(p^B_{jB\varepsilon'}>0)\frac{1}{-Q^B_{jj}}+\sum_{k\notin\{\varepsilon',3-\varepsilon',j\}}p^B_{jk}\E_{k\varepsilon'}[\chi^B_{\varepsilon'}(j)].\label{e:EchiBrecursion4}
\end{align}
A slight complication compared to Eqs.~(\ref{e:pBrecursion1}--\ref{e:pBrecursion3}) is the possibility of there being no transition path from the initial state~$j$ to $\varepsilon'$ that avoids $3-\varepsilon'$. Time spent in~$j$ is only accrued on such transition paths. This is the reason why in Eq.~\eqref{e:EchiBrecursion4}, the mean waiting time $-1/Q^B_{jj}$ is multiplied by $p^B_{jB\varepsilon'}$. In case of nonexistence of the required path or excursion, both sides of this equation are zero.

Manipulations analogous to those applied to Eqs.~(\ref{e:pBrecursion1}--\ref{e:pBrecursion3}) give
\begin{align*}
0&=\sum_{k\neq 3-\varepsilon'}Q^B_{\varepsilon' k}\,\E_{k}[\chi^B_{\varepsilon'}(j) \mathbf{1}(\sigma^B_{\varepsilon'}<\sigma^B_{3-\varepsilon'})],\\%\E_{k\varepsilon'}[\chi^B_{\varepsilon'}(j)],\\%\label{e:EchiBrecursion1}\\
0&=\sum_{k\neq\varepsilon'}Q^B_{3-\varepsilon', k}\,\E_{k}[\chi^B_{\varepsilon'}(j) \mathbf{1}(\sigma^B_{\varepsilon'}<\sigma^B_{3-\varepsilon'})],\\%\E_{k\varepsilon'}[\chi^B_{\varepsilon'}(j)],\\%\label{e:EchiBrecursion2}\\
0&=\sum_{k\notin\{\varepsilon',3-\varepsilon'\}}Q^B_{ik}\,\E_{k}[\chi^B_{\varepsilon'}(j) \mathbf{1}(\sigma^B_{\varepsilon'}<\sigma^B_{3-\varepsilon'})]\quad (i\notin\{1,2,j\}),\\%\E_{k\varepsilon'}[\chi^B_{\varepsilon'}(j)]\qquad(i\notin\{1,2,j\}),\\%\label{e:EchiBrecursion3}\\
-p^B_{jB\varepsilon'}&=\sum_{k\notin\{\varepsilon',3-\varepsilon'\}}Q^B_{jk}\,\E_{k}[\chi^B_{\varepsilon'}(j) \mathbf{1}(\sigma^B_{\varepsilon'}<\sigma^B_{3-\varepsilon'})],%\E_{k\varepsilon'}[\chi^B_{\varepsilon'}(j)],%\label{e:EchiBrecursion4}
\end{align*}
which is the same as the asserted linear system. Once again, the existence of a solution follows from the underlying probabilistic interpretation and uniqueness from Proposition~\ref{th:fullrank}. The case $u_i(j)=0$, when $p^B_{iB\varepsilon'}=0$ or $\E_{i\varepsilon'}[\chi^B_{\varepsilon'}(j)]=0$, has been discussed. If $u_i(j)=p^B_{iB\varepsilon'} \E_{i\varepsilon'}[\chi^B_{\varepsilon'}(j)]>0$, then one can divide it by the nonzero $p^B_{iB\varepsilon'}$ to get $\E_{i\varepsilon'}[\chi^B_{\varepsilon'}(j)]$.
\end{proof}

\subsection{Proof of Proposition~\ref{th:fullrank}}\label{s:prooffullrank}
So far we have been careful not to argue the existence of solutions of the equations in Propositions~\ref{P:Qede} and~\ref{th:EchiB} from $Q^B_0$ having full rank, instead we referred to the probabilistic interpretations. It is because the latter existence result is used to prove full rank itself. Indeed, since $p^B_{jB\varepsilon'}$ and $p^B_{j,B,3-\varepsilon'}$ ($j\in\{3,4,\dots,s\}$) cannot be both zero due to the irreducibility of~$\mathcal{X}^B$, Proposition~\ref{th:EchiB} shows that for every $j\in\{3,4,\dots,s\}$, $\mathbf{e}^B_j$ is in the image of~$Q^B_0$. Moreover, from the definition of~$Q^B_0$, $\mathbf{e}^B_1$ and $\mathbf{e}^B_2$ are its eigenvectors with nonzero eigenvalues. Hence all canonical unit vectors are contained in the image of~$Q^B_0$.

%So far we have been careful to argue the existence of solutions of the equations in Propositions~\ref{P:Qede} and~\ref{th:EchiB} from their probabilistic interpretations and never from $Q^B_0$ having full rank. It is because the latter existence result is used to prove full rank.

\subsection{Applying Theorem~\ref{th:main}}\label{s:algo}

Given the specification of $\mathcal{X}^A$ and $\mathcal{X}^B$, the stationary distribution $\pi$ of~$\mathcal{X}$ can be computed by the following procedure.
\begin{enumerate}
\item Solve one of the two equations in~\eqref{e:pB1}. From the solution $(p^B_{1B\varepsilon'},p^B_{2B\varepsilon'},\dots,p^B_{sB\varepsilon'})\T$ (either $\varepsilon'=1$ or $\varepsilon'=2$), compute for every $j\in\{1,2,\dots,s\}$, $p^B_{j,B,3-\varepsilon'}=1-p^B_{jB\varepsilon'}$.
\item Repeat the previous step with the corresponding equation for~$\mathcal{X}^A$, and for $\varepsilon'\in\{1,2\}$, get $(p^A_{-r+3,A,\varepsilon'},p^A_{-r+4,A,\varepsilon'},\dots,p^A_{2A\varepsilon'})\T$. Here
\[Q^A_0:=\left[\begin{array}{ccccc}Q^A_{-r+3,-r+3}&\dots&Q^A_{-r+3,0}&0&0\\\vdots&\ddots&\vdots&\vdots&\vdots\\Q^A_{0,-r+3}&\dots&Q^A_{00}&0&0\\Q^A_{1,-r+3}&\dots&Q^A_{10}&Q^A_{11}&0\\Q^A_{2,-r+3}&\dots&Q^A_{20}&0&Q^A_{22}\end{array}\right]\]
%Repeat the previous step with the corresponding equation for~$\mathcal{X}^A$ and get $(p^A_{1A\varepsilon'},p^A_{2A\varepsilon'},\dots,p^A_{rA\varepsilon'})\T$ ($\varepsilon'\in\{1,2\}$). Here
%\[Q^A_0:=\left[\begin{array}{ccccc}Q^A_{11}&\dots&Q^A_{1,r-2}&0&0\\\vdots&\ddots&\vdots&\vdots&\vdots\\Q^A_{r-2,1}&\dots&Q^A_{r-2,r-2}&0&0\\Q^A_{r-1,1}&\dots&Q^A_{r-1,r-2}&Q^A_{r-1,r-1}&0\\Q^A_{r1}&\dots&Q^A_{r,r-2}&0&Q^A_{rr}\end{array}\right]\]
must be used and the right-hand sides also change accordingly.
\item Solve Eq.~\eqref{e:EchiB} for both $\varepsilon'\in\{1,2\}$ and for every $j\in\{3,4,\dots,s\}$. Compute $\E_{\varepsilon\varepsilon'}[\chi^B_{\varepsilon'}(j)]$ ($\varepsilon,\varepsilon'\in\{1,2\}$, $j\in\{3,4,\dots,s\}$). %\mathcal{S}^B$.
\item Repeat the previous step with the corresponding equations for~$\mathcal{X}^A$ for every $\varepsilon'\in\{1,2\}$ and every $i\in\{-r+3,-r+4,\dots,0\}$.%\mathcal{S}^A$.
\item For every $\delta\in\{A,B\}$ and $\varepsilon,\varepsilon'\in\{1,2\}$, compute $q^{\delta}_{\varepsilon \delta \varepsilon'}$ using Eq.~\eqref{e:defqdede}.%\\
%(Notice that for $\delta =A$, indices $\varepsilon=1$ and $\varepsilon=2$ in $q^A_{\varepsilon A \varepsilon'}$ correspond to, respectively, indices $i=r-1$ and $i=r$ in $p^A_{i A \varepsilon'}$.)
\item Compute $v,w\in[0,\infty[^{r-2}$, $x,y\in[0,\infty[^{s-2}$ by their definitions~\eqref{e:defvwxy}.
\item Compute $\pi_1$ by Eq.~\eqref{e:pirm1} and $\pi_2$ by Eq.~\eqref{e:pir}.
\item Compute the remaining coordinates of $\pi$ by Eqs.~\eqref{e:pioncaligSA} and~\eqref{e:pioncaligSB}.
\end{enumerate}

Ultimately, finding the stationary distribution $\pi$ can be done by solving $2r-3$ linear equations of size $r\times r$ and $2s-3$ of size $s\times s$ (one system in Eq.~\eqref{e:pB1} once for $\mathcal{X}^A$ and~$\mathcal{X}^B$ each and Eq.~\eqref{e:EchiB} $2(r-2)$ times for~$\mathcal{X}^A$ and $2(s-2)$ times for~$\mathcal{X}^B$). Importantly, as the left-hand-side matrices are shared, the $2r-3$ linear systems with~$Q_0^A$ can be computed in parallel (putting the right-hand-side vectors into one matrix), and similarly with the $2s-3$ linear systems with~$Q_0^B$. This contrasts with directly solving the $(r+s-2)\times (r+s-2)$-sized $\pi\T Q=0$ equation. The new method does not offer an improvement in theoretical computational complexity (calculations omitted).

% many instances: s+r+8+3*2(r-2)+3*2(s-2)+ Eq.(7-8) +3(r-2)+3(s-2)

\section{Parallelism}\label{s:parallel}

This section is concerned with the questions when can the stationary distribution $\pi$ be expected to be a constant multiple of~$\pi^A$ on~$\mathcal{V}^A$ or of~$\pi^B$ on~$\mathcal{V}^B$ (i.e. when are the corresponding vectors parallel; Theorem~\ref{th:parallel} and Proposition~\ref{th:equiv}), whether one can occur without the other (no, as proven by Proposition~\ref{th:equiv}), and what can be said when neither of the two holds (at least bounds for~$\pi$ can be given, Theorem~\ref{th:viwifrompi}).

\subsection{The parallel case}
Consider the following three conditions:
\begin{align*}
(\mathcal{A})\qquad\frac{\pi^A_1}{\pi^A_2}&=\frac{\pi^B_1}{\pi^B_2},\\[6pt]
(\mathcal{B}^A)\qquad\frac{\pi_1}{\pi_2}&=\frac{\pi^A_1}{\pi^A_2},\\[6pt]
(\mathcal{B}^B)\qquad\frac{\pi_1}{\pi_2}&=\frac{\pi^B_1}{\pi^B_2}.
%(\mathcal{A})\qquad\frac{\pi^A_{r-1}}{\pi^A_r}&=\frac{\pi^B_1}{\pi^B_2},\\[6pt]
%(\mathcal{B}^A)\qquad\frac{\pi_{r-1}}{\pi_r}&=\frac{\pi^A_{r-1}}{\pi^A_r},\\[6pt]
%(\mathcal{B}^B)\qquad\frac{\pi_{r-1}}{\pi_r}&=\frac{\pi^B_1}{\pi^B_2}.
%\frac{\pi^A_{r-1}}{\pi^A_r}&=\frac{\pi^B_1}{\pi^B_2}\tag{$\mathcal{A}$}\label{c:A},\\[6pt]
%\frac{\pi_{r-1}}{\pi_r}&=\frac{\pi^A_{r-1}}{\pi^A_r}\tag{$\mathcal{B}^A$}\label{c:BA},\\[6pt]
%\frac{\pi_{r-1}}{\pi_r}&=\frac{\pi^B_1}{\pi^B_2}\tag{$\mathcal{B}^B$}\label{c:BB}.
\end{align*}
%\eqref{c:A} \eqref{c:BA} \eqref{c:BB}
Under Condition~$(\mathcal{A})$, the stationary distribution~$\pi$ of the glued chain can be computed exactly. This case is most similar to gluing at one state.
\begin{thm}\label{th:parallel}
If Condition~$(\mathcal{A})$ holds, then
\[\pi_i=\begin{cases}C\pi^A_i \pi^B_1,&\ \textrm{if}\ i\in\mathcal{S}^A,\\C\pi^A_1 \pi^B_1,&\ \textrm{if}\ i=1,\\C\pi^A_2 \pi^B_1=C\pi^A_1 \pi^B_2,&\ \textrm{if}\ i=2,\\C\pi^A_1 \pi^B_i,&\ \textrm{if}\ i\in\mathcal{S}^B,\end{cases}\]
%\[\pi_i=\begin{cases}C\pi^A_i \pi^B_1,&\ \textrm{if}\ i\in\{1,2,\dots,r-2\},\\C\pi^A_{r-1} \pi^B_1,&\ \textrm{if}\ i=r-1,\\C\pi^A_r \pi^B_1=C\pi^A_{r-1} \pi^B_2,&\ \textrm{if}\ i=r,\\C\pi^A_{r-1} \pi^B_{i-r+2},&\ \textrm{if}\ i\in\{r+1,r+2,\dots,r+s-2\},\end{cases}\]
with
\[C=\left(\pi^A_1+\pi^B_1-\pi^A_1(\pi^B_1+\pi^B_2)\right)^{-1}=\left(\pi^A_1+\pi^B_1-\pi^B_1(\pi^A_1+\pi^A_2)\right)^{-1}.\]
%\[C=\left(\pi^A_{r-1}+\pi^B_1-\pi^A_{r-1}(\pi^B_1+\pi^B_2)\right)^{-1}=\left(\pi^A_{r-1}+\pi^B_1-\pi^B_1(\pi^A_{r-1}+\pi^A_r)\right)^{-1}.\]
In particular, the explicit solution shows that Condition~$(\mathcal{A})$ implies Conditions $(\mathcal{B}^A)$ and~$(\mathcal{B}^B)$.
\end{thm}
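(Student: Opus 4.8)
The plan is to verify that the vector $\pi$ displayed in the statement satisfies the characterising linear system \eqref{e:pioncaligSA}--\eqref{e:pisumsto1} of Theorem~\ref{th:main}; since that system has a unique solution, this identifies the proposed vector as the stationary distribution and no further work is needed. The very first thing to check is that the candidate is even well defined: the two expressions $C\pi^A_2\pi^B_1$ and $C\pi^A_1\pi^B_2$ offered for $\pi_2$ agree precisely when $\pi^A_2\pi^B_1=\pi^A_1\pi^B_2$, i.e.\ exactly under Condition~$(\mathcal{A})$. This interchangeability, used repeatedly below, is the whole point of the hypothesis and lets me pick whichever form of $\pi_2$ is convenient in each equation.

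For the entries on $\mathcal{S}^A$, I would insert $\pi_1=C\pi^A_1\pi^B_1$ and the first form $\pi_2=C\pi^A_2\pi^B_1$ into the right-hand side $\pi_1 v+\pi_2 w$ of \eqref{e:pioncaligSA}, factor out $C\pi^B_1$, and recognise the remaining bracket $\pi^A_1 v+\pi^A_2 w$ as the right-hand side of Eq.~\eqref{e:piAoncaligVA}, which equals $(\pi^A_{-r+3},\dots,\pi^A_0)\T$. This reproduces $\pi_i=C\pi^A_i\pi^B_1$ for $i\in\mathcal{S}^A$. The computation for $\mathcal{S}^B$ is identical after using instead the second form $\pi_2=C\pi^A_1\pi^B_2$, factoring out $C\pi^A_1$, and invoking the $\mathcal{X}^B$-analogue of \eqref{e:piAoncaligVA}. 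Thus \eqref{e:pioncaligSA} and \eqref{e:pioncaligSB} hold.

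For \eqref{e:pionS1S2} I would read off the ratio $\pi_1/\pi_2$ in two ways: the first form gives $\pi_1/\pi_2=\pi^A_1/\pi^A_2=q^A_{2A1}/q^A_{1A2}$ by Eq.~\eqref{e:piArm1piAr}, while the second gives $\pi_1/\pi_2=\pi^B_1/\pi^B_2=q^B_{2B1}/q^B_{1B2}$ by its $\mathcal{X}^B$-analogue. Under~$(\mathcal{A})$ these two ratios coincide, and the elementary mediant identity ($\tfrac ab=\tfrac cd$ forces both to equal $\tfrac{a+c}{b+d}$) then yields $\pi_1/\pi_2=(q^A_{2A1}+q^B_{2B1})/(q^A_{1A2}+q^B_{1B2})$, which is \eqref{e:pionS1S2} (equivalently \eqref{e:pirm1pir}). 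The normalisation \eqref{e:pisumsto1} is where $C$ gets pinned down: summing the $\mathcal{S}^A$ entries together with $\pi_1$ and the first form of $\pi_2$ collapses, via $\sum_{i\in\mathcal{S}^A}\pi^A_i+\pi^A_1+\pi^A_2=1$, to $C\pi^B_1$, while the $\mathcal{S}^B$ entries contribute $C\pi^A_1(1-\pi^B_1-\pi^B_2)$; setting the total to $1$ gives the first stated form of $C$. Regrouping $\pi_2$ with $\mathcal{S}^B$ in its second form gives the second stated form of $C$, and the two agree once more exactly by~$(\mathcal{A})$.

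Finally, the implications $(\mathcal{A})\Rightarrow(\mathcal{B}^A),(\mathcal{B}^B)$ are immediate from the explicit answer, since the two forms of $\pi_2$ give $\pi_1/\pi_2=\pi^A_1/\pi^A_2$ and $\pi_1/\pi_2=\pi^B_1/\pi^B_2$ respectively. There is no genuine obstacle here beyond bookkeeping: the argument is a direct substitution into an already-solved linear system, and the only thing to watch is which of the two equal forms of $\pi_2$ (hence of $C$) is convenient at each step, together with the observation that Condition~$(\mathcal{A})$ is exactly what makes those forms interchangeable.
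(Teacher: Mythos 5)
Your proof is correct, but it takes a genuinely different route from the paper's. The paper proves Theorem~\ref{th:parallel} from scratch, without invoking Theorem~\ref{th:main} at all: it verifies directly that the candidate vector is nonnegative (via $C>0$), sums to one, and satisfies $\pi\T Q=0$, where the last point is an entry-by-entry computation exploiting the near-block structure of $Q$, the stationarity relations $(\pi^A)\T Q^A=0$ and $(\pi^B)\T Q^B=0$, and the identity $\pi^B_1\pi^A_2=\pi^A_1\pi^B_2$ supplied by Condition~$(\mathcal{A})$. You instead substitute the candidate into the characterising system \eqref{e:pioncaligSA}--\eqref{e:pisumsto1} and appeal to the uniqueness claim of Theorem~\ref{th:main}, together with the intermediate relations \eqref{e:piAoncaligVA} and \eqref{e:piArm1piAr} (and their $\mathcal{X}^B$-analogues) established in its proof. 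This is legitimate --- there is no circularity, since Theorem~\ref{th:main} is proved independently of Theorem~\ref{th:parallel} --- and your key steps (the interchangeability of the two forms of $\pi_2$ under $(\mathcal{A})$, the factoring of $\pi_1 v+\pi_2 w$ into $C\pi^B_1(\pi^A_1 v+\pi^A_2 w)$, and the mediant identity yielding \eqref{e:pionS1S2}) are all sound. As for what each approach buys: yours is shorter and more structural, showing precisely how Condition~$(\mathcal{A})$ makes the glued-chain system collapse onto the single-chain relations, but it inherits the full machinery of the excursion quantities $v,w,x,y$ and the regenerative-process argument behind Theorem~\ref{th:main}; the paper's verification is self-contained linear algebra, so Theorem~\ref{th:parallel} stands on its own and directly parallels the justification of the one-state-gluing formula \eqref{eq:982}, which is of the same elementary kind.
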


\begin{proof}% [Theorem~\ref{th:parallel}]
Three statements need to be verified: the entries of $\pi$ are nonnegative, they sum to one and $\pi\T Q=0$.

To the first, because of irreducibility all entries of $\pi^A$ and~$\pi^B$ are positive. All is needed is that $C>0$. Since $\pi^B$ sums to one,
\[\pi^A_1+\pi^B_1-\pi^A_1(\pi^B_1+\pi^B_2)>\pi^A_1+\pi^B_1-\pi^A_1>0.\]
%\[\pi^A_{r-1}+\pi^B_1-\pi^A_{r-1}(\pi^B_1+\pi^B_2)>\pi^A_{r-1}+\pi^B_1-\pi^A_{r-1}>0.\]

As to the second statement,
\begin{align*}
\frac{1}{C}\sum_{i=-r+3}^{s}\pi_i&=\sum_{i=-r+3}^2 \pi^A_i \pi^B_1 +\sum_{i=3}^{s} \pi^A_1 \pi^B_i\\
&=1 \pi^B_1 + \pi^A_1 \sum_{i=3}^{s} \pi^B_i\\
&= \pi^B_1 + \pi^A_1 (1-\pi^B_1-\pi^B_2)\\
&=\pi^B_1 + \pi^A_1 -\pi^A_1 (\pi^B_1 +\pi^B_2)
%\frac{1}{C}\sum_{i=1}^{r+s-2}\pi_i&=\sum_{i=1}^r \pi^A_i \pi^B_1 +\sum_{i=r+1}^{r+s-2} \pi^A_{r-1} \pi^B_{i-r+2}\\
%&=1 \pi^B_1 + \pi^A_{r-1} \sum_{i=3}^{s} \pi^B_i\\
%&= \pi^B_1 + \pi^A_{r-1} (1-\pi^B_1-\pi^B_2)\\
%&=\pi^B_1 + \pi^A_{r-1} -\pi^A_{r-1} (\pi^B_1 +\pi^B_2)
\end{align*}
is exactly $1/C$, proving that the entries of~$\pi$ sum to one.

The validity of the last statement can be most easily seen by considering how the transition rate matrix $Q$ arises from $Q^A$ and~$Q^B$ almost as a block diagonal matrix if not for a $2\times 2$ overlap where the entries are summed. In the following, we rely on $(\pi^A)\T Q^A=0$ and $(\pi^B)\T Q^B=0$. The result of the multiplication $(\pi\T Q)_i$ for $i\in\{-r+3,-r+4,\dots,0\}$ can be seen to be zero from
\begin{align*}
(C^{-1}\pi\T Q)_i&=\pi^B_1 ((\pi^A)\T Q^A)_i=\pi^B_1 0=0.
\end{align*}
The case of $i\in\{3,4,\dots,s\}$ is entirely similar. For $i=1$, using Condition~$(\mathcal{A})$ in the form $\pi^B_1 \pi^A_2=\pi^A_1 \pi^B_2$ in the second step,
\begin{align*}
(C^{-1}\pi\T Q)_1&=\sum_{k\in\mathcal{S}^A} \pi^B_1 \pi^A_k Q^A_{k1}+ \pi^B_1 \pi^A_1 (Q^A_{11}+Q^B_{11})\\
&\quad+ \pi^B_1 \pi^A_2(Q^A_{21}+Q^B_{21}) +\sum_{k\in\mathcal{S}^B} \pi^A_1 \pi^B_k Q^B_{k1}\\
&=\pi^B_1 \sum_{k\in\mathcal{V}^A} \pi^A_k Q^A_{k1} + \pi^A_1\sum_{k\in\mathcal{V}^B} \pi^B_k Q^B_{k1}\\
&=\pi^B_1 0 + \pi^A_1 0 =0.
%(C^{-1}\pi\T Q)_{r-1}&=\sum_{k=1}^{r-2} \pi^B_1 \pi^A_k Q^A_{k,r-1}+ \pi^B_1 \pi^A_{r-1}(Q^A_{r-1,r-1}+Q^B_{11})\\
%&\quad+ \pi^B_1 \pi^A_r(Q^A_{r,r-1}+Q^B_{21}) +\sum_{k=r+1}^{r+s-2} \pi^A_{r-1} \pi^B_{k-r+2} Q^B_{k-r+2,1}\\
%&=\pi^B_1 \sum_{k=1}^{r} \pi^A_k Q^A_{k,r-1} + \pi^A_{r-1}\sum_{k=r-1}^{r+s-2} \pi^B_{k-r+2} Q^B_{k-r+2,1}\\
%&=\pi^B_1 0 + \pi^A_{r-1} 0 =0.
\end{align*}
The last remaining case, $i=2$, once again uses the identity $\pi^B_1 \pi^A_2=\pi^A_1 \pi^B_2$ in the second step to show
\begin{align*}
(C^{-1}\pi\T Q)_2&=\sum_{k\in\mathcal{S}^A} \pi^B_1 \pi^A_k Q^A_{k2}+ \pi^B_1 \pi^A_1 (Q^A_{12}+Q^B_{12})\\
&\quad+ \pi^B_1 \pi^A_2 (Q^A_{22}+Q^B_{22}) +\sum_{k\in\mathcal{S}^B} \pi^A_1 \pi^B_k Q^B_{k2}\\
&=\pi^B_1 \sum_{k\in\mathcal{V}^A} \pi^A_k Q^A_{k2} + \pi^A_1\sum_{k\in\mathcal{V}^B} \pi^B_k Q^B_{k2}\\
&=\pi^B_1 0 + \pi^A_1 0 =0.
%(C^{-1}\pi\T Q)_r&=\sum_{k=1}^{r-2} \pi^B_1 \pi^A_k Q^A_{kr}+ \pi^B_1 \pi^A_{r-1}(Q^A_{r-1,r}+Q^B_{12})\\
%&\quad+ \pi^B_1 \pi^A_r(Q^A_{rr}+Q^B_{22}) +\sum_{k=r+1}^{r+s-2} \pi^A_{r-1} \pi^B_{k-r+2} Q^B_{k-r+2,2}\\
%&=\pi^B_1 \sum_{k=1}^{r} \pi^A_k Q^A_{kr} + \pi^A_{r-1}\sum_{k=r-1}^{r+s-2} \pi^B_{k-r+2} Q^B_{k-r+2,2}\\
%&=\pi^B_1 0 + \pi^A_{r-1} 0 =0.
\end{align*}
\end{proof}

\begin{prop}\label{th:equiv}
Conditions $(\mathcal{A})$, $(\mathcal{B}^A)$ and~$(\mathcal{B}^B)$ are all equivalent.
\end{prop}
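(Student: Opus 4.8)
The plan is to reduce all three conditions to a single algebraic identity among the excursion intensities, namely $q^A_{2A1}\,q^B_{1B2}=q^A_{1A2}\,q^B_{2B1}$, and then conclude by transitivity. The essential content is the elementary fact that the mediant $\frac{a+c}{b+d}$ of two positive fractions equals either of them precisely when the two fractions are equal.

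First I would collect the three relevant ratio formulae already derived in the proof of Theorem~\ref{th:main}. Equation~\eqref{e:piArm1piAr} gives $\pi^A_1/\pi^A_2=q^A_{2A1}/q^A_{1A2}$, its $B$-analogue gives $\pi^B_1/\pi^B_2=q^B_{2B1}/q^B_{1B2}$, and Equation~\eqref{e:pirm1pir} gives $\pi_1/\pi_2=(q^A_{2A1}+q^B_{2B1})/(q^A_{1A2}+q^B_{1B2})$. The four intensities $q^A_{2A1},q^A_{1A2},q^B_{2B1},q^B_{1B2}$ are each of the form $q^{\delta}_{\varepsilon,\delta,3-\varepsilon}$ and hence strictly positive by irreducibility, as already noted; thus every denominator is nonzero and the glued ratio is exactly the mediant of the two single-chain ratios.

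Then I would cross-multiply each condition in turn. Condition~$(\mathcal{A})$ reads $q^A_{2A1}/q^A_{1A2}=q^B_{2B1}/q^B_{1B2}$, equivalently $q^A_{2A1}q^B_{1B2}=q^A_{1A2}q^B_{2B1}$. Condition~$(\mathcal{B}^A)$ reads $(q^A_{2A1}+q^B_{2B1})/(q^A_{1A2}+q^B_{1B2})=q^A_{2A1}/q^A_{1A2}$; clearing denominators and cancelling the common term $q^A_{2A1}q^A_{1A2}$ yields the same identity $q^A_{1A2}q^B_{2B1}=q^A_{2A1}q^B_{1B2}$. An identical computation reduces Condition~$(\mathcal{B}^B)$, via $(q^A_{2A1}+q^B_{2B1})/(q^A_{1A2}+q^B_{1B2})=q^B_{2B1}/q^B_{1B2}$, to that very identity. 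Hence each of the three conditions is equivalent to the single cross-multiplication identity, so they are mutually equivalent.

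There is essentially no obstacle here: the argument is pure algebra once the three ratios are in hand. The only point requiring care is the positivity of the four intensities, which guarantees that the fractions are well defined and that the cancellations are legitimate; this is supplied by irreducibility. Since Theorem~\ref{th:parallel} has already established $(\mathcal{A})\Rightarrow(\mathcal{B}^A)$ and $(\mathcal{A})\Rightarrow(\mathcal{B}^B)$, one could alternatively present only the converse implications, each of which follows immediately from the same computation.
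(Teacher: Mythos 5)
Your proof is correct and takes essentially the same route as the paper: both use Eqs.~\eqref{e:piArm1piAr} and~\eqref{e:pirm1pir} to rewrite the ratio conditions in terms of $a=q^A_{2A1}$, $b=q^A_{1A2}$, $c=q^B_{2B1}$, $d=q^B_{1B2}$ (positive by irreducibility) and observe that everything reduces to the single cross-product identity $ad=bc$, i.e.\ the mediant property. The only organizational difference is that the paper establishes $(\mathcal{B}^A)\iff(\mathcal{B}^B)$ by this algebra and then gets $(\mathcal{A})$ into the loop via Theorem~\ref{th:parallel} together with the trivial implication $(\mathcal{B}^A)\wedge(\mathcal{B}^B)\Rightarrow(\mathcal{A})$, whereas you reduce $(\mathcal{A})$ itself to $ad=bc$ using the $B$-analogue of Eq.~\eqref{e:piArm1piAr}, making the argument self-contained and independent of Theorem~\ref{th:parallel}.
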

The interesting observation is that Condition~$(\mathcal{B}^A)$ alone implies Condition~$(\mathcal{B}^B)$ (and vice versa). It is obvious that Conditions $(\mathcal{B}^A)$ and~$(\mathcal{B}^B)$ together imply Condition~$(\mathcal{A})$. Combined with Theorem~\ref{th:parallel}, we will get the statement.
%Condition~$(\mathcal{B}^A)$ implies Condition~$(\mathcal{B}^B)$ (and vice versa). It is obvious that Conditions $(\mathcal{B}^A)$ and~$(\mathcal{B}^B)$ together imply Condition~$(\mathcal{A})$. Combined with Theorem~\ref{th:parallel}, we get that Conditions $(\mathcal{A})$, $(\mathcal{B}^A)$ and~$(\mathcal{B}^B)$ are all equivalent.
\begin{proof}
From Eqs.~\eqref{e:piArm1piAr} and~\eqref{e:pirm1pir}, Conditions~$(\mathcal{B}^A)$ and~$(\mathcal{B}^B)$ are equivalent to
\begin{align*}
(\mathcal{B}^A)\qquad\frac{q^A_{2A1}+q^B_{2B1}}{q^A_{1A2}+q^B_{1B2}}&=\frac{q^A_{2A1}}{q^A_{1A2}},\\[6pt]
(\mathcal{B}^B)\qquad\frac{q^A_{2A1}+q^B_{2B1}}{q^A_{1A2}+q^B_{1B2}}&=\frac{q^B_{2B1}}{q^B_{1B2}}.
%(\mathcal{B}^A)\qquad\frac{q^A_{2A1}+q^A_{2\emptyset 1}+q^B_{2B1}+q^B_{2\emptyset 1}}{q^A_{1A2}+q^A_{1\emptyset 2}+q^B_{1B2}+q^B_{1\emptyset 2}}&=\frac{q^A_{2A1}+q^A_{2\emptyset 1}}{q^A_{1A2}+q^A_{1\emptyset 2}},\\[6pt]
%(\mathcal{B}^B)\qquad\frac{q^A_{2A1}+q^A_{2\emptyset 1}+q^B_{2B1}+q^B_{2\emptyset 1}}{q^A_{1A2}+q^A_{1\emptyset 2}+q^B_{1B2}+q^B_{1\emptyset 2}}&=\frac{q^B_{2B1}+q^B_{2\emptyset 1}}{q^B_{1B2}+q^B_{1\emptyset 2}}.
\end{align*}
For brevity, we introduce the notations
\begin{align*}
a&:=q^A_{2A1},&c&:=q^B_{2B1},\\
b&:=q^A_{1A2},&d&:=q^B_{1B2}.
%a&:=q^A_{2A1}+q^A_{2\emptyset 1},&c&:=q^B_{2B1}+q^B_{2\emptyset 1},\\
%b&:=q^A_{1A2}+q^A_{1\emptyset 2},&d&:=q^B_{1B2}+q^B_{1\emptyset 2}.
\end{align*}
Due to the irreducibility of the Markov chains, $a,b,c,d$ are all positive and the following equivalences hold:
\begin{align*}
&(\mathcal{B}^A)&&\iff &\frac{a+c}{b+d}&=\frac{a}{b}&&\iff& bc&=ad\\[6pt]%&\iff&
&&&\iff &\frac{a+c}{b+d}&=\frac{c}{d}&&\iff &&(\mathcal{B}^B).%&&
%(\mathcal{B}^A)&\iff&\frac{a+c}{b+d}&=\frac{a}{b}&&\iff\\[6pt]
%\iff ab+bc=ab+ad&\iff& bc&=ad &&\iff ad+cd=bc+cd\iff\\[6pt]
%&\iff& \frac{a+c}{b+d}&=\frac{c}{d}&&\iff (\mathcal{B}^B).
\end{align*}
\end{proof}

\subsection{The non-parallel case}%\label{s:nonparallel}

If Condition~$(\mathcal{A})$ does not hold, then our problem formulation can be turned around: assuming that $\pi^A, \pi^B$ and~$\pi$ are all known, $v,w,x,y$ can be computed by a second method and an elementwise bound for~$\pi$ on $\mathcal{S}^A$ and~$\mathcal{S}^B$ in terms of $\pi$ on states $1$ and~$2$ will also result. Although so far we have been trying to reduce finding~$\pi$ to the smaller problems of finding $\pi^A$ and~$\pi^B$, one should not forget that $\pi$ is always available by a Gaussian elimination from the specification of~$\mathcal{X}$.

\begin{thm}\label{th:viwifrompi}
If Condition~$(\mathcal{A})$ fails, then for $i\in\mathcal{S}^A$,%$i\in\{1,2,\dots,r-2\}$,
\begin{align*}
v_i&=\frac{\pi^A_2\pi_i - \pi_2\pi^A_i}{\pi^A_2\pi_1 - \pi_2\pi^A_1},\\
w_i&=\frac{\pi^A_1\pi_i - \pi_1\pi^A_i}{\pi^A_1\pi_2 - \pi_1\pi^A_2},%\\
%v_i&=\frac{\pi^A_r \pi_i -\pi_r \pi^A_i}{\pi^A_r\pi_{r-1}-\pi_r\pi^A_{r-1}},\\
%w_i&=\frac{\pi^A_{r-1}\pi_i-\pi_{r-1}\pi^A_i}{\pi^A_{r-1}\pi_r-\pi_{r-1}\pi^A_r},
\end{align*}
and for $j\in\mathcal{S}^B$,%\{3,4,\dots,s\}$,
\begin{align*}
x_j&=\frac{\pi^B_2\pi_j - \pi_2\pi^B_j}{\pi^B_2\pi_1-\pi_2\pi^B_1},\\
y_j&=\frac{\pi^B_1\pi_j - \pi_1\pi^B_j}{\pi^B_1\pi_2-\pi_1\pi^B_2}.%\\
%x_{j-2}&=\frac{\pi^B_2 \pi_{j+r-2} -\pi_r \pi^B_j}{\pi^B_2\pi_{r-1}-\pi_r\pi^B_1},\\
%y_{j-2}&=\frac{\pi^B_1\pi_{j+r-2}-\pi_{r-1}\pi^B_j}{\pi^B_1\pi_r-\pi_{r-1}\pi^B_2}.
\end{align*}
Additionally,
\begin{align}
\label{e:piipiAi}\min\left\{\frac{\pi_1}{\pi^A_1},\frac{\pi_2}{\pi^A_2}\right\}&\le\frac{\pi_i}{\pi^A_i}\le\max\left\{\frac{\pi_1}{\pi^A_1},\frac{\pi_2}{\pi^A_2}\right\},\\[6pt]
\label{e:pijpiBj}\min\left\{\frac{\pi_1}{\pi^B_1},\frac{\pi_2}{\pi^B_2}\right\}&\le\frac{\pi_j}{\pi^B_j}\le\max\left\{\frac{\pi_1}{\pi^B_1},\frac{\pi_2}{\pi^B_2}\right\}
%\min\left\{\frac{\pi_{r-1}}{\pi^A_{r-1}},\frac{\pi_r}{\pi^A_r}\right\}&<\frac{\pi_i}{\pi^A_i}<\max\left\{\frac{\pi_{r-1}}{\pi^A_{r-1}},\frac{\pi_r}{\pi^A_r}\right\},\\[6pt]
%\label{e:pijpiBj}\min\left\{\frac{\pi_{r-1}}{\pi^B_1},\frac{\pi_r}{\pi^B_2}\right\}&<\frac{\pi_{j+r-2}}{\pi^B_j}<\max\left\{\frac{\pi_{r-1}}{\pi^B_1},\frac{\pi_r}{\pi^B_2}\right\}
\end{align}
and
\begin{align}\label{e:pirm1pirineq}
\min\left\{\frac{\pi^A_1}{\pi^A_2},\frac{\pi^B_1}{\pi^B_2}\right\}&<\frac{\pi_1}{\pi_2}<\max\left\{\frac{\pi^A_1}{\pi^A_2},\frac{\pi^B_1}{\pi^B_2}\right\}.
%\min\left\{\frac{\pi^A_{r-1}}{\pi^A_r},\frac{\pi^B_1}{\pi^B_2}\right\}&<\frac{\pi_{r-1}}{\pi_r}<\max\left\{\frac{\pi^A_{r-1}}{\pi^A_r},\frac{\pi^B_1}{\pi^B_2}\right\}.
\end{align}
\end{thm}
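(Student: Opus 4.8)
The plan is to prove the three assertions in turn, exploiting one structural fact common to all of them: the vectors $v$ and $w$ of \eqref{e:defvwxy} appear \emph{simultaneously} in the relation \eqref{e:piAoncaligVA} for $\mathcal{X}^A$ and in the relation \eqref{e:pioncaligSA} for the glued chain $\mathcal{X}$, and likewise $x,y$ appear both in the $\mathcal{X}^B$-analogue of \eqref{e:piAoncaligVA} and in \eqref{e:pioncaligSB}. Read coordinatewise, these two relations say that for each fixed $i\in\mathcal{S}^A$ the pair $(v_i,w_i)$ solves the $2\times 2$ linear system
\begin{align*}
\pi^A_1 v_i+\pi^A_2 w_i&=\pi^A_i,\\
\pi_1 v_i+\pi_2 w_i&=\pi_i.
\end{align*}

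For the first assertion I would simply apply Cramer's rule to this system. Its determinant is $\pi^A_1\pi_2-\pi^A_2\pi_1$, which vanishes exactly when $\pi_1/\pi_2=\pi^A_1/\pi^A_2$, i.e.\ under Condition~$(\mathcal{B}^A)$; by Proposition~\ref{th:equiv} this is equivalent to Condition~$(\mathcal{A})$. Since $(\mathcal{A})$ is assumed to fail, the determinant is nonzero, the system is uniquely solvable, and reading off $v_i$ and $w_i$ yields the stated expressions. The formulas for $x_j,y_j$ follow by the identical argument applied to \eqref{e:pioncaligSB} and its $\mathcal{X}^B$-counterpart.

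For the bounds \eqref{e:piipiAi} I would avoid substituting the explicit $v_i,w_i$ and instead argue directly from $\pi_i=\pi_1 v_i+\pi_2 w_i$, $\pi^A_i=\pi^A_1 v_i+\pi^A_2 w_i$ together with the nonnegativity $v_i,w_i\ge 0$. Setting $a_1:=\pi^A_1 v_i\ge 0$ and $a_2:=\pi^A_2 w_i\ge 0$, one checks that
\[\frac{\pi_i}{\pi^A_i}=\frac{\pi_1 v_i+\pi_2 w_i}{\pi^A_1 v_i+\pi^A_2 w_i}=\frac{a_1\,\frac{\pi_1}{\pi^A_1}+a_2\,\frac{\pi_2}{\pi^A_2}}{a_1+a_2},\]
a convex combination of $\pi_1/\pi^A_1$ and $\pi_2/\pi^A_2$, its denominator $a_1+a_2=\pi^A_i$ being strictly positive by irreducibility. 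Any convex combination lies between the minimum and maximum of its inputs, which is exactly \eqref{e:piipiAi}; the inequality \eqref{e:pijpiBj} is the same computation on the $\mathcal{X}^B$-side.

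Finally, for \eqref{e:pirm1pirineq} I would reduce to the elementary mediant inequality. From \eqref{e:pirm1pir}, \eqref{e:piArm1piAr} and its $B$-analogue, and with the abbreviations $a=q^A_{2A1}$, $b=q^A_{1A2}$, $c=q^B_{2B1}$, $d=q^B_{1B2}$ of Proposition~\ref{th:equiv} (all positive by irreducibility),
\[\frac{\pi^A_1}{\pi^A_2}=\frac{a}{b},\qquad\frac{\pi^B_1}{\pi^B_2}=\frac{c}{d},\qquad\frac{\pi_1}{\pi_2}=\frac{a+c}{b+d}.\]
The last fraction is the mediant of the first two, and computing $\frac{a+c}{b+d}-\frac{a}{b}=\frac{bc-ad}{b(b+d)}$ together with the symmetric difference $\frac{c}{d}-\frac{a+c}{b+d}=\frac{bc-ad}{d(b+d)}$ shows the mediant lies strictly between $a/b$ and $c/d$ whenever $ad\neq bc$. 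I expect the only genuine subtlety, shared by the first and third parts, to be this non-degeneracy: the $2\times 2$ determinant must not vanish, and the two base ratios must be distinct, both of which amount to $ad\neq bc$. The decisive point is to recognise via Proposition~\ref{th:equiv} that $ad=bc$ is precisely Condition~$(\mathcal{A})$, which is excluded by hypothesis; once that link is made, everything else is routine mediant and convex-combination algebra.
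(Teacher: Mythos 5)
Your proposal is correct, and while your derivation of the closed forms for $v,w,x,y$ is essentially the paper's (solving the same $2\times 2$ system built from \eqref{e:piAoncaligVA} and \eqref{e:pioncaligSA} --- the paper does the elimination by hand, you invoke Cramer's rule; both hinge on the determinant $\pi^A_1\pi_2-\pi^A_2\pi_1$ being nonzero because $(\mathcal{B}^A)$ fails, via Proposition~\ref{th:equiv}), your proofs of the two sets of bounds take genuinely different and leaner routes. For \eqref{e:piipiAi} the paper argues through signs: at least one of $v_i,w_i$ is positive, the two Cramer denominators are negatives of one another, hence $\bigl(\pi_i/\pi^A_i-\pi_2/\pi^A_2\bigr)\bigl(\pi_i/\pi^A_i-\pi_1/\pi^A_1\bigr)\le 0$. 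Your observation that $\pi_i/\pi^A_i$ is a convex combination of $\pi_1/\pi^A_1$ and $\pi_2/\pi^A_2$ with nonnegative weights $\pi^A_1 v_i$, $\pi^A_2 w_i$ summing to $\pi^A_i>0$ gives the same conclusion in one line, needs no assumption on Condition~$(\mathcal{A})$ at all, and still exposes the strictness criterion (strict on a side exactly when the corresponding weight is positive) that the paper only discusses informally after the theorem. For \eqref{e:pirm1pirineq} the divergence is larger: the paper runs an indirect argument, assuming $\pi_1/\pi_2$ exceeds (or falls below) both ratios, feeding \eqref{e:piipiAi}--\eqref{e:pijpiBj} into the stationarity equation $(\pi\T Q)_2=0$, and extracting the contradiction through a case distinction on $Q^A_{12}>0$ versus $Q^A_{12}=0$ that uses the explicit form of $v_k$; you instead note that by \eqref{e:piArm1piAr}, its $B$-analogue and \eqref{e:pirm1pir} the three ratios are $a/b$, $c/d$ and their mediant $(a+c)/(b+d)$, and the strict mediant inequality under $ad\neq bc$ (which is exactly the failure of $(\mathcal{A})$) finishes the proof immediately. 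This is a quantitative sharpening of the very algebra the paper already uses to prove Proposition~\ref{th:equiv}, it avoids the contradiction machinery entirely, and it makes the third part independent of the first two (on which the paper's proof of it relies); conversely, the paper's argument stays closer to the stationarity equations themselves and so would survive in settings where the excursion-rate identities \eqref{e:piArm1piAr} and \eqref{e:pirm1pir} were not already in hand. All steps you use (positivity of $a,b,c,d$ by irreducibility, nonnegativity of $v,w$, positivity of $\pi^A_i$) are established in the paper, so there is no gap.
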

Of the two inequalities in~\eqref{e:piipiAi}, one must be strict, since Condition~$(\mathcal{B}^A)$ fails by assumption and the terms at the two ends are not equal. They are both strict inequalities if and only if both $v_i,w_i$ are positive. Note that when Condition~$(\mathcal{A})$ does hold, then due to Theorem~\ref{th:parallel}, the two inequalities in~\eqref{e:piipiAi} are both equalities and $\pi_i/\pi^A_i=\pi_1/\pi^A_1=\pi_2/\pi^A_2=C\pi^B_1$. Analogously for \eqref{e:pijpiBj}. Lastly, because of Proposition~\ref{th:equiv}, the relation~\eqref{e:pirm1pirineq} also holds with equalities.

\begin{proof}
Let us subtract from the $i$th entry of both sides of Eq.~\eqref{e:pioncaligSA} $\pi_2/\pi^A_2$ times the $i$th entry of both sides of Eq.~\eqref{e:piAoncaligVA}:
\begin{align*}
\pi_i-\frac{\pi_2}{\pi^A_2}\pi^A_i&=\bigg(\pi_1-\frac{\pi_2}{\pi^A_2}\pi^A_1\bigg)v_i.
%\pi_i-\frac{\pi_r}{\pi^A_r}\pi^A_i&=\bigg(\pi_{r-1}-\frac{\pi_r}{\pi^A_r}\pi^A_{r-1}\bigg)v_i.
\end{align*}
Since Condition~$(\mathcal{A})$ does not hold, nor does Condition~$(\mathcal{B}^A)$, and we can divide over to arrive at
\begin{align*}
v_i&=\bigg(\pi_i-\frac{\pi_2}{\pi^A_2}\pi^A_i\bigg)\bigg(\pi_1-\frac{\pi_2}{\pi^A_2}\pi^A_1\bigg)^{-1}=\frac{\pi^A_2\pi_i-\pi_2\pi^A_i}{\pi^A_2\pi_1-\pi_2 \pi^A_1}.
%v_i&=\bigg(\pi_i-\frac{\pi_r}{\pi^A_r}\pi^A_i\bigg)\bigg(\pi_{r-1}-\frac{\pi_r}{\pi^A_r}\pi^A_{r-1}\bigg)^{-1}=\frac{\pi^A_r\pi_i-\pi_r\pi^A_i}{\pi^A_r\pi_{r-1}-\pi_r \pi^A_{r-1}}.
\end{align*}

If we now subtract from the $i$th entry of both sides of Eq.~\eqref{e:pioncaligSA} $\pi_1/\pi^A_1$ times the $i$th entry of both sides of Eq.~\eqref{e:piAoncaligVA}, then
\begin{align*}
\pi_i-\frac{\pi_1}{\pi^A_1}\pi^A_i&=\bigg(\pi_2-\frac{\pi_1}{\pi^A_1}\pi^A_2\bigg)w_i.
%\pi_i-\frac{\pi_{r-1}}{\pi^A_{r-1}}\pi^A_i&=\bigg(\pi_r-\frac{\pi_{r-1}}{\pi^A_{r-1}}\pi^A_r\bigg)w_i.
\end{align*}
Condition~$(\mathcal{B}^A)$ does not hold, and we can divide both sides to get
\begin{align*}
w_i&=\bigg(\pi_i-\frac{\pi_1}{\pi^A_1}\pi^A_i\bigg)\bigg(\pi_2-\frac{\pi_1}{\pi^A_1}\pi^A_2\bigg)^{-1}=\frac{\pi^A_1\pi_i-\pi_1\pi^A_i}{\pi^A_1\pi_2-\pi_1\pi^A_2}.
%w_i&=\bigg(\pi_i-\frac{\pi_{r-1}}{\pi^A_{r-1}}\pi^A_i\bigg)\bigg(\pi_r-\frac{\pi_{r-1}}{\pi^A_{r-1}}\pi^A_r\bigg)^{-1}=\frac{\pi^A_{r-1}\pi_i-\pi_{r-1}\pi^A_i}{\pi^A_{r-1}\pi_r-\pi_{r-1}\pi^A_r}.
\end{align*}

From the definitions of $v$ and~$w$ in Eqs.~\eqref{e:defvwxy}, it is clear that at least one of $v_i$ and $w_i$ is positive. Consequently,%From the definitions of $v$ and~$w$ in Section~\ref{s:mainresult}
\begin{align*}
(\pi^A_2\pi_i-\pi_2\pi^A_i)(\pi^A_2\pi_1-\pi_2\pi^A_1)&\ge 0,\\
(\pi^A_1\pi_i-\pi_1\pi^A_i)(\pi^A_1\pi_2-\pi_1\pi^A_2)&\ge 0,
\end{align*}
and at least one inequality is strict. (If $v_i>0$, then the first inequality is strict; if $w_i>0$, then the second one is strict.)
Here the second factors, formerly the denominators, are negatives of one another, and are nonzero because of the failure of Condition~$(\mathcal{B}^A)$, therefore the enumerators too must have opposite signs (or at most one of them is zero),
\[(\pi^A_2\pi_i-\pi_2\pi^A_i)(\pi^A_1\pi_i-\pi_1\pi^A_i)\le 0.\]
After dividing both sides by the positive~$(\pi^A_i)^2\pi^A_1\pi^A_2$,
\begin{align}\label{e:negprod}
\left(\frac{\pi_i}{\pi^A_i}-\frac{\pi_2}{\pi^A_2}\right) \left(\frac{\pi_i}{\pi^A_i}-\frac{\pi_1}{\pi^A_1}\right)&\le 0.
\end{align}
Condition~$(\mathcal{B}^A)$ does not hold, therefore either $\pi_1/\pi^A_1>\pi_2/\pi^A_2$ or $\pi_1/\pi^A_1<\pi_2/\pi^A_2$. Ineq.~\eqref{e:negprod} gives the bounds~\eqref{e:piipiAi} for~$\pi_i/\pi^A_i$ in either case, even when one factor is zero.

If $v_i>0$, then $\pi_i/\pi^A_i\neq\pi_2/\pi^A_2$ in~\eqref{e:piipiAi}, and if $w_i>0$, then $\pi_i/\pi^A_i\neq\pi_1/\pi^A_1$. If both $v_i,w_i$ are positive, then all inequalities in this derivation are strict. Hence both inequalities in~\eqref{e:piipiAi} are strict. The argument also works in the reverse direction, to show $v_i,w_i$ are both positive from strict inequalities in~\eqref{e:piipiAi}. The statements for $x$, $y$ and the bounds~\eqref{e:pijpiBj} follow similarly.

Finally, let us suppose that the bounds~\eqref{e:pirm1pirineq} do not hold and we derive a contradiction. The starting assumption is that Condition~$(\mathcal{A})$ fails, and so do $(\mathcal{B}^A)$ and~$(\mathcal{B}^B)$, and $\pi^A_1/\pi^A_2$, $\pi^B_1/\pi^B_2$ and $\pi_1/\pi_2$ have three distinct values. The bounds~\eqref{e:pirm1pirineq} will be violated if and only if $\pi_1/\pi_2$ is either greater or less than the other two fractions. Here we examine the case when it is greater (the other case is similar).

From $\max\{\pi^A_1/\pi^A_2,\pi^B_1/\pi^B_2\}<\pi_1/\pi_2$,
\begin{align}\label{e:indir}
\frac{\pi_2}{\pi^A_2}<\frac{\pi_1}{\pi^A_1}&\quad\textrm{and}\quad\frac{\pi_2}{\pi^B_2}<\frac{\pi_1}{\pi^B_1}.
\end{align}
From this by~\eqref{e:piipiAi} and \eqref{e:pijpiBj}, for $i\in\mathcal{S}^A$ and for $j\in\mathcal{S}^B$,
\[\frac{\pi_2}{\pi^A_2}\le\frac{\pi_i}{\pi^A_i}\quad\textrm{and}\quad\frac{\pi_2}{\pi^B_2}\le\frac{\pi_j}{\pi^B_j}.\]
One more algebraic rearrangement yields
\begin{align}
\frac{\pi^A_i}{\pi^A_2}\le\frac{\pi_i}{\pi_2}\quad&\textrm{and}\quad\frac{\pi^B_j}{\pi^B_2}\le\frac{\pi_j}{\pi_2}.\label{e:ineqpair}
\end{align}
Note that the first holds with strict inequality for $i=1$ and the second for $j=1$ too, due to the indirect assumption~\eqref{e:indir}. The contradiction will arise because such a $\pi$ cannot be a nullvector of~$Q$. To show this, we multiply both sides of the left inequalities by the nonnegative~$Q^A_{i2}$ and sum for all $i\in \mathcal{V}^A\setminus\{2\}$, we multiply both sides of the right inequalities by the nonnegative~$Q^B_{j2}$ and sum for all $j\in \mathcal{V}^B\setminus\{2\}$:
\begin{align*}
\frac{1}{\pi^A_2}\sum_{i\in \mathcal{V}^A\setminus\{2\}}\pi^A_i Q^A_{i2}&\le\frac{1}{\pi_2}\sum_{i\in \mathcal{V}^A\setminus\{2\}}\pi_i Q^A_{i2},\\[6pt]
\frac{1}{\pi^B_2}\sum_{j\in \mathcal{V}^B\setminus\{2\}}\pi^B_j Q^B_{j2}&\le\frac{1}{\pi_2}\sum_{j\in \mathcal{V}^B\setminus\{2\}}\pi_j Q^B_{j2}.
%\frac{1}{\pi_1}\sum_{i\in \mathcal{V}^A\setminus\{1\}}\pi_i Q^A_{i1}&\le\frac{1}{\pi^A_1}\sum_{i\in \mathcal{V}^A\setminus\{1\}}\pi^A_i Q^A_{i1}.
\end{align*}
We add $Q^A_{22}$ to both sides of the first inequality and $Q^B_{22}$ to both sides of the second to get
\begin{align}
\label{e:A2}0=\frac{1}{\pi^A_2}\sum_{i\in\mathcal{V}^A}\pi^A_i Q^A_{i2}&\le\frac{1}{\pi_2}\sum_{i\in\mathcal{V}^A}\pi_i Q^A_{i2},\\[6pt]
%\label{e:A1}\frac{1}{\pi_1}\sum_{i\in\mathcal{V}^A}\pi_i Q^A_{i1}&\le\frac{1}{\pi^A_1}\sum_{i\in\mathcal{V}^A}\pi^A_i Q^A_{i1}=0.
%\end{align}
%The same derivation from~\eqref{e:pijpiBj}, for $j\in\mathcal{S}^B$, yields
%\begin{align}
\label{e:B2}0=\frac{1}{\pi^B_2}\sum_{j\in\mathcal{V}^B}\pi^B_j Q^B_{j2}&\le\frac{1}{\pi_2}\sum_{j\in\mathcal{V}^B}\pi_j Q^B_{j2}.%,\\[6pt]
%\label{e:B1}\frac{1}{\pi_1}\sum_{j\in\mathcal{V}^B}\pi_j Q^B_{j1}&\le\frac{1}{\pi^B_1}\sum_{j\in\mathcal{V}^B}\pi^B_j Q^B_{j1}=0.
\end{align}
The left-hand sides of \eqref{e:A2} and \eqref{e:B2} are zero since $(\pi^A)\T Q^A=0$, $(\pi^B)\T Q^B=0$. The sum of the two right-hand sides of \eqref{e:A2} and \eqref{e:B2} is $\pi_2^{-1}(\pi\T Q)_2$. $\pi\T Q=0$ must hold, therefore a contradiction will arise and the proof will be complete if we show that the inequality in \eqref{e:A2} is strict.

If $Q^A_{12}>0$, then from the first inequality of~\eqref{e:ineqpair} with $i=1$ (which is strict for this~$i$),
\[\frac{\pi^A_1}{\pi^A_2}Q^A_{12}<\frac{\pi_1}{\pi_2}Q^A_{12},\]
and \eqref{e:A2} holds with strict inequality.% If $Q^A_{21}>0$, then with $i=2$,
%\[\frac{\pi_2}{\pi_1}Q^A_{21}<\frac{\pi^A_2}{\pi^A_1}Q^A_{21}\]
%and \eqref{e:A1} holds with strict inequality.
%Now only the case $Q^A_{12}=Q^A_{21}=0$ is left. In this case, since $1$ and $2$ are communicating in $\mathcal{X}^A$, there must be a 

If $Q^A_{12}=0$, then state~$2$ must still be reachable from state~$1$ in~$\mathcal{X}^A$, therefore there must be a directed path from $1$ to~$2$ whose last step is from some $k\in\mathcal{S}^A$ to~$2$. Then $v_k>0$ by the definition of~$v$ and $Q^A_{k2}>0$. From the earlier result of this proof on the explicit form of $v_k$, the first inequality in~\eqref{e:ineqpair} cannot be an equality for $i=k$,
\[\frac{\pi^A_k}{\pi^A_2}<\frac{\pi_k}{\pi_2},\]
and
\[\frac{\pi^A_k}{\pi^A_2}Q^A_{k2}<\frac{\pi_k}{\pi_2}Q^A_{k2}\]
gives \eqref{e:A2} with strict inequality.

\end{proof}

\section{Examples}\label{s:examples}

We present two related case studies for the application of the results of this paper. We implemented the numerical algorithm of Section~\ref{s:algo} in \textsc{GNU~Octave} with code that is compatible with \textsc{Matlab} (The MathWorks, Inc.).

First, consider the Markov processes given by the transition rate matrices
\begin{align*}
Q^A&=\left[\begin{array}{ccc}-6&4&2\\1&-2&1\\2&0&-2\end{array}\right],&Q^B&=\left[\begin{array}{cccc}-3&1&0&2\\3&-3&0&0\\0&4&-4&0\\0&0&1&-1\end{array}\right].
\end{align*}
%QA=[-6 4 2; 1 -2 1; 2 0 -2];QB=[-3 1 0 2; 3 -3 0 0; 0 4 -4 0; 0 0 1 -1];
These processes are visualised in Figure~\ref{f:ex1}. States $3,4\in\mathcal{S}^B$ can be reached only on $t^B_{1B2}$-excursions but not on a $t^B_{2B1}$-excursion. Here $\pi^A=(0.2,0.4,0.4)\T$ and $\pi^B=(2/9,2/9,1/9,4/9)\T$. One can notice that Condition~$(\mathcal{A})$ holds:
\[\frac{\pi^A_1}{\pi^A_2}=\frac{0.4}{0.4}=1=\frac{2/9}{2/9}=\frac{\pi^B_1}{\pi^B_2}.\]
Thus Theorem~\ref{th:parallel} holds and the stationary distribution $\pi=(0.1,0.2,0.2,0.1,0.4)\T$ (computed either from $\pi\T Q=0$ or with the explicit solution in Theorem~\ref{th:parallel} or by the numerical algorithm) is indeed parallel to $\pi^A$ on $\mathcal{V}^A$ and to $\pi^B$ on~$\mathcal{V}^B$. See also Figure~\ref{f:exc}.
\begin{figure}[h]
\begin{center}
\includegraphics[height=3cm]{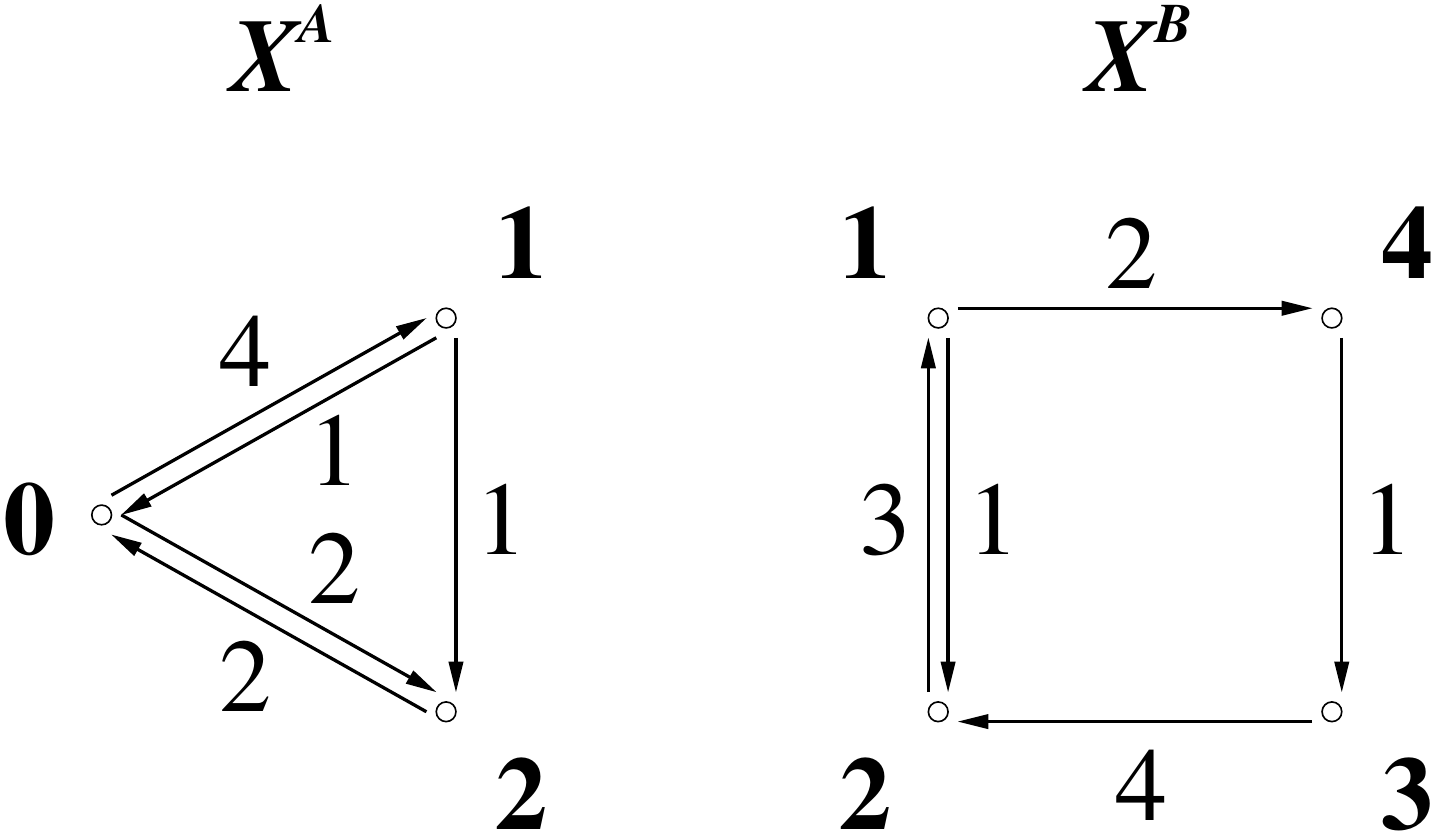}
\caption{State spaces and transitions of $\mathcal{X}^A$ and $\mathcal{X}^B$ of the first example.}
\label{f:ex1}
\end{center}
\end{figure}

The second example is defined by
\begin{align*}
Q^A&=\left[\begin{array}{ccccc}-4&0&0&0&4\\ 1&-1&0&0&0\\ 0&0&-6&4&2\\ 0&2&1&-3&0\\ 0&0&2&0&-2\end{array}\right],&Q^B&=\left[\begin{array}{cc}-2&2\\ 3&-3\end{array}\right],
\end{align*}
%QA=[-4 0 0 0 4; 1 -1 0 0 0; 0 0 -6 4 2; 0 2 1 -3 0; 0 0 2 0 -2];QB=[-2 2; 3 -3];
and is displayed in Figure~\ref{f:ex2}. In this case $\pi^A=(1/12,1/3,1/8,1/6,7/24)\T$ and $\pi^B=(0.6,0.4)\T$. Condition~$(\mathcal{A})$ does not hold. However, the glued chain has the same state space diagram as in the first example, although the state labels are permuted. Both the direct calculation and the new algorithm give $\pi=(0.1,0.4,0.1,0.2,0.2)\T$. (It also follows from the result for the first example.)

\begin{figure}[h]
\begin{center}
\includegraphics[height=3cm]{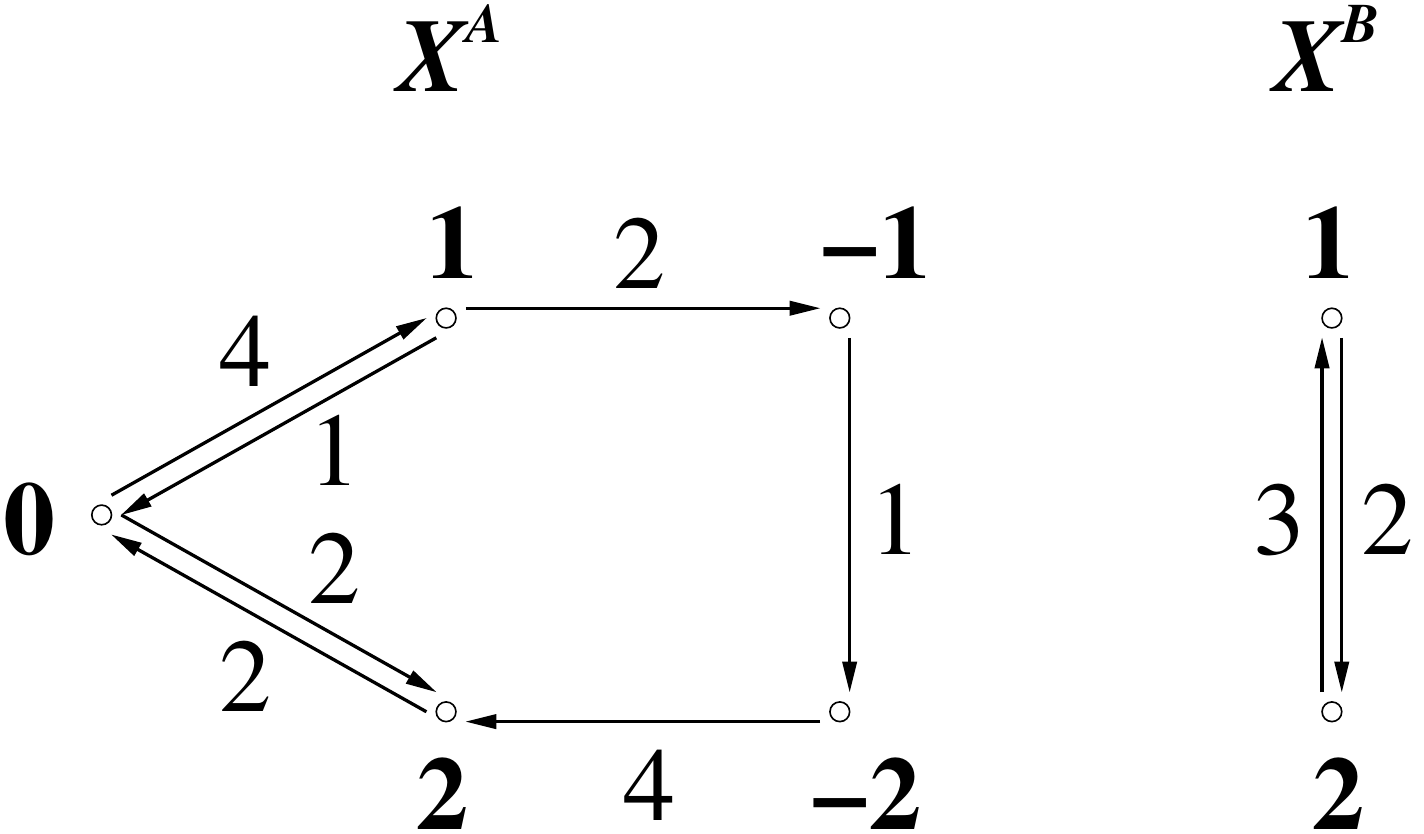}
\caption{State spaces and transitions of $\mathcal{X}^A$ and $\mathcal{X}^B$ of the second example.}
\label{f:ex2}
\end{center}
\end{figure}

Ineq.~\eqref{e:piipiAi} has $\pi_2/\pi^A_2=48/70\approx 0.69$ and $\pi_1/\pi^A_1=12/10$ on its left and right ends, respectively. $(\pi_i/\pi^A_i)_{i\in\{-2,-1,0\}}=(1.2,1.2,0.8)\T$ are sandwiched by those values, as predicted. In addition, the numerical algorithm finds that $v(-2), v(-1), v(0)$ and $w(0)$ are positive, $w(-2)=w(-1)=0$. Hence the pattern of strict inequalities satisfies the observations made in the proof of Theorem~\ref{th:viwifrompi}. Ineq.~\eqref{e:pijpiBj} is absent because $\mathcal{S}^B$ is empty. Lastly, Ineq.~\eqref{e:pirm1pirineq} holds in the form
\[\frac{\pi^A_1}{\pi^A_2}=\frac{4}{7}<\frac{\pi_1}{\pi_2}=1<\frac{\pi^B_1}{\pi^B_2}=\frac{3}{2}.\]
%\pi_i/\pi^A_i=\pi_1/\pi^A_1=\pi_2/\pi^A_2

\begin{figure}[h]
\begin{center}
\includegraphics[height=2.96cm]{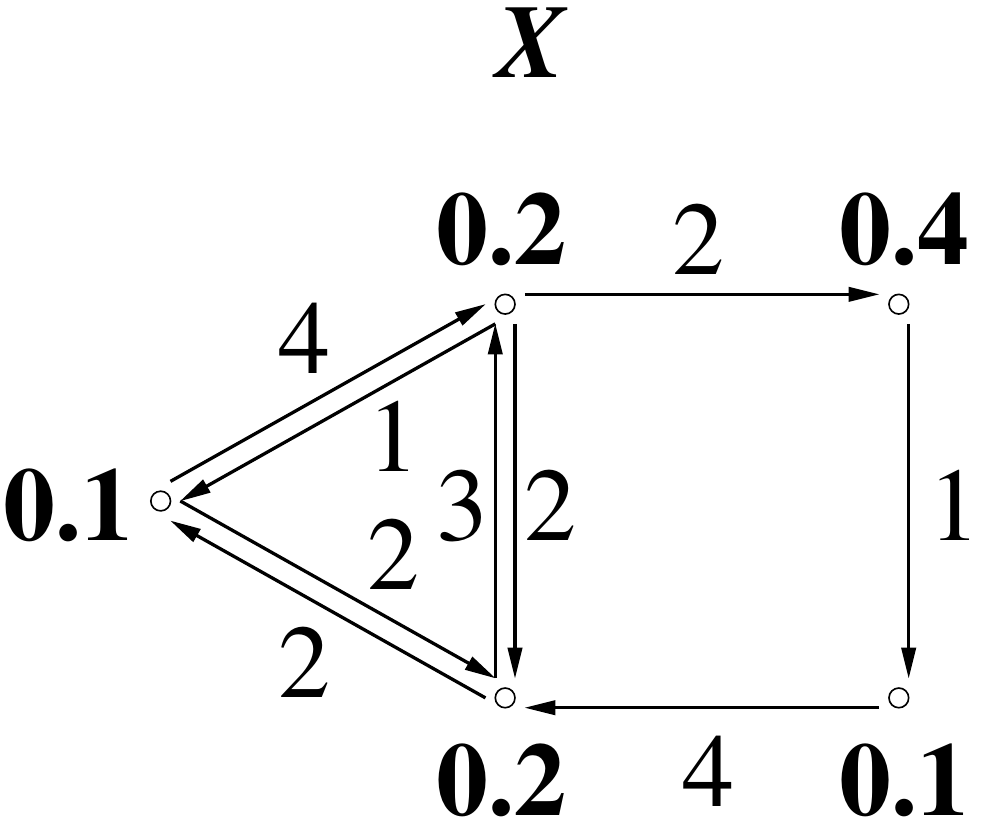} % should be 3.28/3.33=0.985 times the height of the previous two
\caption{State space and transitions of the glued chain~$\mathcal{X}$, which are identical in the two examples. State labels were omitted as they differ in the two cases and were replaced by the values of the stationary distribution.}
\label{f:exc}
\end{center}
\end{figure}

\section{Conclusion}

This work describes the stationary distribution~$\pi$ in a finite-state, time-homogeneous, continuous-time Markov jump process that was created by gluing together two irreducible Markov processes at two states. When the two original processes share the ratio between the stationary probabilities of their two to-be-glued states (Condition~$(\mathcal{A})$), then the stationary distribution~$\pi$ can be explicitly given (Theorem~\ref{th:parallel}). It is a constant multiple of the original stationary distribution $\pi^A$ on the part of the state space that came from this first process, and a constant multiple of $\pi^B$ on the part of the state space that came from the second process.
%, using the stationary distributions $\pi^A,\pi^B$ of the two original processes.

When Condition~$(\mathcal{A})$ does not hold, then $\pi$ is known in terms of transition rates in the original chains and additional information about mean times spent in states on excursions from the two glued states and about probabilities of different excursions when leaving the two glued states (Theorem~\ref{th:main}). Ultimately, the knowledge of the original stationary distributions does not suffice to compute~$\pi$.%Ultimately, it is not the stationary distributions that are used about the two original chains. %All the information needed is contained in the two original chains, but ultimately, their stationary distributions are not used.

Also in this case, the ratio between the stationary probabilities of the two glued states in the glued chain is sandwiched between those ratios of the corresponding states in $\mathcal{X}^A$ and of those in~$\mathcal{X}^B$. Other sandwiching bounds are also proven for states in $\mathcal{S}^A\cup\mathcal{S}^B$.

If gluing is applied to grow a large $\mathcal{X}^A$ by an $\mathcal{X}^B$ with a linear state diagram, although the required excursion probabilities etc. might well be computable by hand for~$\mathcal{X}^B$, the same quantities for~$\mathcal{X}^A$ are still only available by solving linear equations, as summarised in Section~\ref{s:algo}. There is no escaping the lengthy calculations for the complicated chain, even when this chain is perturbed by a simple chain.

%Theorem~\ref{th:viwifrompi} suggests a starting point to explore Markov jump processes $\mathcal{X}^A$ of large state spaces by connecting a process $\mathcal{X}^B$ of a simple state space to various parts of it and computing $v_i$ and~$w_i$ from the theoretically easily computable nullvectors $\pi^A$ and~$\pi$. This approach would be analogous to trying to learn about an electrical circuit by connecting an external circuit to various parts of it.

The regeneration argument exposed in this paper should be applicable when, say, $\mathcal{X}^B$~is not irreducible only because there is no opportunity for $t^B_{2B1}$-excursions (meaning that there are two separate communicating classes in the state space, one containing~$1\in\mathcal{V}^B$ and the other~$2\in\mathcal{V}^B$, and the communicating class of $2$ is an absorbing set), but the gluing introduces such a state-$2$-to-state-$1$ transition via~$\mathcal{V}^A$.

The gluing studied here is a binary operation on the set of Markov chains. A possible line of future research might consider other operations with Markov chains: gluing at multiple states, removing parts of the state space, taking a product of state spaces, or combining product and merging, as was alluded to in Section~\ref{s:intro}. If we stay with gluing at two states, it would be interesting to know how other properties of a Markov chain, such as the mixing time, are affected if another chain is glued to it.
%The regeneration argument exposed in this paper should be applicable when, say, $\mathcal{X}^A$~is not irreducible because there is no opportunity for $t^A_{2A1}$-excursions, but the gluing introduces such an $S_2$-to-$S_1$ transition via~$\mathcal{V}^B$.\\
%The regeneration argument exposed in this paper should be applicable when, say, $\mathcal{X}^A$~is not irreducible because there is no opportunity for $t^A_{2A1}$-excursions and no direct edge $(V^A_r,V^A_{r-1})$ exists, but the gluing introduces such an $S_2$-to-$S_1$ transition via~$\mathcal{V}^B$.\\

\subsubsection*{Acknowledgments}
B.M. gratefully acknowledges funding by a postdoctoral fellowship of the Alexander von Humboldt Foundation. We are grateful to Andr\'as Gy\"orgy (Massachusetts Institute of Technology), who suggested to write down the linear systems solved in Propositions~\ref{P:Qede} and~\ref{th:EchiB}. We acknowledge Alison Etheridge (University of Oxford) to have noticed that the main points of Theorem~\ref{th:parallel} must hold.
%We acknowledge Alison Etheridge to have noticed that Condition~$(\mathcal{A})$ is a sufficient condition for $\pi$ to be a constant multiple of~$\pi^A$ on~$\mathcal{S}^A$ and of~$\pi^B$ on~$\mathcal{S}^B$ (Theorem~\ref{th:parallel}), similarly to the case of gluing at one state.

%\bibliographystyle{alpha}
\bibliographystyle{plainnat}
%\bibliography{gluing}
%\end{document}

\end{document}